\newtheorem{theorem}{Theorem}[section]
\newtheorem{corollary}[theorem]{Corollary}
\newtheorem{lemma}[theorem]{Lemma}
\newtheorem{remark}[theorem]{Remark}
\newtheorem{problem}[theorem]{Problem}
\newtheorem{definition}[theorem]{Definition}
\def\footnoterule{\relax%
\kern-5pt
\hbox to \columnwidth{\hfill\vrule width .9\columnwidth height 0.4pt\hfill}
\kern4.6pt}
\definecolor{darkblue}{rgb}{0.0,0.0,0.6}
\begin{document}

\begin{frontmatter}

\title{On Cost-Sensitive Distributionally \\ Robust Log-Optimal Portfolio} 

\thanks[footnoteinfo]{This paper was supported in part by National Science and Technology Council (NSTC), Taiwan, under Grant: NSTC113--2628--E--007--015--.}

\author[CHHSIEH]{Chung-Han Hsieh}\ead{ch.hsieh@mx.nthu.edu.tw. Corresponding author.} \; and \;
\author[CHHSIEH]{Xiao-Rou Yu} \ead{zoe890324@gmail.com}

\address[CHHSIEH]{Department of Quantitative Finance, National Tsing Hua University, Hsinchu, Taiwan, 300044.}

\begin{keyword}  
	distributional robust optimization, stochastic financial systems, robustness, risk management.               
\end{keyword}                             

\begin{abstract}     
	This paper addresses a novel \emph{cost-sensitive} distributionally robust log-optimal portfolio problem, where the investor faces \emph{ambiguous} return distributions, and a general convex transaction cost model is incorporated.  The uncertainty in the return distribution is quantified using the \emph{Wasserstein} metric, which captures distributional ambiguity. We establish conditions that ensure robustly survivable trades for all distributions in the Wasserstein ball under convex transaction costs. By leveraging duality theory, we approximate the infinite-dimensional distributionally robust optimization problem with a finite convex program, enabling computational tractability for mid-sized portfolios. Empirical studies using S\&P 500 data validate our theoretical framework: without transaction costs, the optimal portfolio converges to an equal-weighted allocation, while with transaction costs, the portfolio shifts slightly towards the risk-free asset, reflecting the trade-off between cost considerations and optimal allocation.	
 \end{abstract}

\end{frontmatter}

\section{Introduction} \label{section: Introduction}
This paper presents a novel approach to financial portfolio optimization, framed within a control-theoretic perspective. Specifically, we formulate a cost-sensitive, distributionally robust log-optimal portfolio optimization problem that integrates the Wasserstein metric for ambiguity consideration, alongside a general convex transaction cost model. This framework bridges the gap between financial portfolio management and optimal control, emphasizing robustness and computational tractability in real-world trading.

Portfolio management can be viewed as a stochastic optimal control problem, where the objective is to maximize returns or minimize potential risks.
Beyond the widely known mean-variance approach introduced by \cite{markowitz1952}, which emphasizes risk-return trade-offs, the expected log-optimal growth~(ELG) strategy, originally introduced by~\cite{Kelly_1956}, is another influential approach. The ELG strategy can be viewed as a feedback policy that seeks to maximize the expected logarithmic growth of the investor's wealth.  
This approach aligns to optimize the long-term performance of a financial stochastic system; e.g., see \cite{primbs2007portfolio}  and \cite{barmish2024jump} for a tutorial on stock trading from a control-theoretic viewpoint.
Several foundational studies, such as~\cite{Breiman_1961, Cover_1984} and~\cite{cover2012elements}, have established that the ELG strategy is asymptotically and comparatively optimal compared to other objective functions. See also an extensive textbook \cite{maclean2011kelly} summarized the advantages and disadvantages of the ELG strategy.

Many extensions of the ELG problem have been explored. For example,~\cite{kuhn2010analysis} analyzed continuous-time log-optimal portfolios in both high-frequency trading and buy-and-hold scenarios. Recently,~\cite{hsieh2023asymptotic, wong2023frequency} incorporated rebalancing frequency and transaction costs in the log-optimal portfolio framework. However, in practice, the true distribution of returns is often unknown or only partially observed, which contrasts with the typical assumptions made in these studies.

\subsection{Ambiguity Consideration}

Emerging research trends have incorporated the concept of \emph{ambiguity} into return models to better capture real-world uncertainties.  Ambiguity refers to uncertainty about the true probability distribution of returns, even when certain properties are known or estimated from historical data. It quantifies the allowable deviations of these parameters from their nominal values.

To address this ambiguity issue, a significant area of robust optimization has focused on the worst-case approach. Comprehensive surveys of robust optimization and its applications can be found in works by \cite{ben2008selected, bertsimas2011theory}; see also \cite{rujeerapaiboon2016robust} and \cite{proskurnikov2023benefit} on a study of robust optimal growth problems. However, worst-case solutions are often overly conservative in practice. 
This challenge has spurred extensive research into distributionally robust optimization (DRO), which seeks a balance between robustness and optimality rather than focusing exclusively on worst-case scenarios.

\subsection{Distributionally Robust Optimization Approach}

 In decision-making under uncertainty in $\mathbb{R}^m$ space, there are three main frameworks: Stochastic Optimization~(SO), Robust Optimization (RO), and Distributionally Robust Optimization (DRO); see~\cite{sheriff2023nonlinear, shapiro2021lectures}.

Let $U$ be the utility function, $w$ be the decision variable, and $\mathcal{W}$ be the feasible set of $w$. SO aims to maximize~$\mathbb{E}^{\mathbb{F}_0}[U(w, X)] $, assuming investors have complete knowledge of the nominal distribution $ \mathbb{F}_0$ of the random variable $ X $. However, as mentioned previously, in reality, the distribution $\mathbb{F}_0$ is typically unknown and may change over time. These limitations necessitate solving stochastic optimization problems under worst-case scenarios, transforming the original stochastic problems into deterministic max-min (or min-max) problems, specifically through RO. Indeed, RO seeks to solve:
$
\max_{w \in \mathcal{W}} \; \inf_{x \in \mathfrak{X}} U(w, x),
$ 
where $ \mathfrak{X} $ is the support set of~$ X $. Although RO provides a robust solution, it can be overly conservative and computationally inefficient in practice; see~\cite{ben2009robust}.

To address these drawbacks, DRO serves as a bridge between SO and RO by incorporating an ambiguity set $\mathcal{P}$ for the distribution~$\mathbb{F}$. The goal is to solve
$
\max_{w \in \mathcal{W}}   \inf_{\mathbb{F} \in \mathcal{P}} \mathbb{E}^{\mathbb{F}}[ U(w, X)].
$
It is worth noting that the robustness of the solution depends on the size of the ambiguity set~$\mathcal{P}$. A larger~$\mathcal{P}$ reduces DRO to the aforementioned RO problem, while a smaller~$\mathcal{P}$ that contains only the nominal distribution~$\mathbb{F}_0$ simplifies DRO to SO.

As previously mentioned, the DRO formulation can be viewed as a generalization of both SO and RO. Various approaches exist for defining the ambiguity set in a DRO model. One common method is to represent the ambiguity set as a ball in the space of probability distributions, where the conservativeness of the optimization problem is controlled by adjusting the radius of the ambiguity set. Examples include the Prohorov metric~\cite{erdougan2006ambiguous}, the Kullback-Leibler divergence~\cite{hu2013kullback}, and the popular Wasserstein metric~\cite{mohajerin2018data, gao2023distributionally, li2023wasserstein}. Other methods, such as the total variation distance~\cite{rahimian2022effective} and \cite{farokhi2023distributionally}, consider the average absolute difference of distributions at each point, while the moment-based distance~\cite{delage2010distributionally} focuses on higher-order statistical properties. 
These methods provide different perspectives for measuring differences between distributions. A comprehensive review can be found in \cite{rahimian2019distributionally}.

Recently, several studies have focused on \emph{data-driven} DRO problems in finance applications, characterizing the ambiguity set based on historical data.
For instance,~\cite{sun2018distributional} explored a data-driven distributional robust Kelly strategy in a discrete distribution with a finite-support setting. Similarly,~\cite{hsieh2023solving} introduced a supporting hyperplane approximation approach, transforming a class of distributionally robust log-optimal portfolio problems with polyhedron ambiguity sets into tractable robust linear programs.
\cite{blanchet2022distributionally}  studied the distributionally robust mean-variance portfolio selection with Wasserstein distances.  Moreover,~\cite{li2023wasserstein} proposed optimizing the Wasserstein-Kelly portfolio, focusing instead on the log-return distribution. 
However, this approach relies on additional logarithmic transformation and does not consider transaction costs in the model.

While significant advances in DRO methodologies have been achieved, critical gaps remain: the robustly survivable trades across ambiguous distributions and the impact of transaction costs---key factors in real-world trading---are often overlooked. To bridge this gap between theory and practice, we propose a novel, cost-sensitive, data-driven, distributionally robust log-optimal portfolio optimization problem formulation. Notably, the Wasserstein metric, which allows for both discrete and continuous distributions in data-driven settings, provides three key advantages: finite sample guarantee, asymptotic consistency, and tractability, as discussed in~\cite{mohajerin2018data}. Leveraging these properties, we define the ambiguity set using the Wasserstein metric and incorporate a general convex transaction cost model into our proposed formulation.

\subsection{Transaction Costs Consideration}
In control systems, costs associated with control actions, such as energy consumption or actuator wear, are critical factors that influence the design of optimal control strategies. Similarly, in financial markets, transaction costs---such as commission fees, bid-ask spreads, latency costs, and transaction taxes,---play a crucial role in the optimal decision-making process.  

Previous studies have attempted to address transaction costs within control-theoretic frameworks. For example, \cite{bertsimas1998optimal} studied an optimal control strategy to minimize the mean-variance trade-off of the execution cost.
Other studies, such as \cite{oksendal2002optimal}, examined portfolio management with two assets, focusing on the effects of fixed and proportional transaction costs on fund transfers. \cite{lobo2007portfolio} addressed issues involving fixed and linear transaction costs.  \cite{mei2018portfolio} proposed suboptimal rebalancing and consumption policies under proportional transaction costs for mean-variance criterion, along with an upper bound analysis on policy performance. 
An empirical study by~\cite{ruf2020impact} analyzed portfolio performance in the presence of transaction costs across various classical portfolios.

Given the importance of transaction costs, this paper extends existing frameworks by focusing on a general convex transaction cost model. This model, commonly encountered in real-world applications, provides a more flexible and realistic representation of cost structures, such as linear or piecewise-linear transaction~costs, see~\cite{davis1990portfolio, korn1998portfolio, lobo2007portfolio}.  By incorporating these into our DRO framework, we obtain more robust solutions that reflect practical trading considerations.

\subsection{Contributions of This Paper}

Building on the control-theoretic perspective, we formulate a novel cost-sensitive, distributionally robust log-optimal portfolio optimization problem using the Wasserstein metric along with a general convex transaction cost model;  see  Section~\ref{section: Problem Formulation}.
We derive conditions that ensure robust survivability of trades for all distributions in the Wasserstein ball under convex transaction costs; see Lemma~\ref{lemma: robust survival condition with transaction costs rebalancing}.
Using duality theory, we further show that the infinite-dimensional primal distributionally robust optimization problem can be approximated by a finite convex program; see Theorem~\ref{theorem:convex reduction with non-zero transaction costs during rebalancing} and Corollary~\ref{corollary: Reduction of Semi-Infinite Constraint} in Section~\ref{section: Main Results}.  This enables tractable computation of the optimal control policy.
Additionally, we conduct extensive empirical studies in Section~\ref{section: Empirical Studies}, exploring the impact of adjusting the radius size of the ambiguity set and the effect of varying transaction costs.

\section{Preliminaries} \label{section: Problem Formulation}
This section provides some necessary preliminaries and the cost-sensitive distributionally robust log-optimal portfolio problem formulation.

\subsection{Notations}
Throughout the paper, we should denote by $\mathbb{R}$  the set of real numbers, $\mathbb{R}_{+}$ the set of nonnegative real numbers, and $\overline{\mathbb{R}} := \mathbb{R} \cup \{-\infty, \infty\}$ the extended reals.
All the random objects are defined in a probability space~$(\Omega, \mathcal{F}, \mathbb{P}).$ 
The notation $\| \cdot \|$ refers to the norm on~$\mathbb{R}^m$, then $\| \cdot \|_*$ is the associated dual norm defined through~$\| z \|_* := \max_x \{  z^\top x  : \| x \|\leq 1 \}$ for $z \in \mathbb{R}^m$.
We denote by $\delta_x$ the Dirac distribution concentrating unit mass at~$x \in \mathbb{R}^m.$ The product of two probability distributions~$\mathbb{P}_1$ and $\mathbb{P}_2$ on~$\mathfrak{X}_1$ and $\mathfrak{X}_2$, respectively, is the distribution $\mathbb{P}_1 \otimes \mathbb{P}_2$ on $\mathfrak{X}_1 \times \mathfrak{X}_2.$

\subsection{Cost-Sensitive Account Dynamics}	
Consider a financial market with $\mathfrak{M}  > 1$ assets and form a portfolio of $m \leq  \mathfrak{M}$ assets. For stage $k = 0,1,\dots,$ let $S_i(k) > 0$ with $i \in \{1, 2, \dots, m\}$ be the stock prices for the $i$th asset at time $k$. We denote the prices of these assets at time~$ k$ by a random vector ${S}(k) := [S_{1}(k) \ S_{2}(k) \ \cdots \ S_{m}(k)]^\top$.

For $k=0$, let the initial account value $V(0) > 0$ be given. Then the initial investment policy for the $i$th asset is of the linear feedback form $u_i(0) := w_i(0) V(0)$. The overall investment at $k=0$ is $u(0) = \sum_{i=1}^m u_i(0)$.

Let $n \geq 1$ denote the number of steps between rebalancing periods. The investor waits for $n$ steps and updates the investments to~$u_i(n) := w_i(n) V(n)$. In other words, the investor buys and holds the assets for $n$ steps before rebalancing.

The cost-sensitive account value dynamics at time $n \geq 1$ is characterized by the following stochastic difference equation:
\begin{align} \label{eq: freq-dependent account dynamics}
V(n) 
&= V(0) + \sum_{i=1}^{m} \frac{u_i(0)}{S_i(0)} (S_i(n) - S_i(0)) - TC(u(0))
\end{align}
where $u_i(0) := w_i(0) V(0)$ with $w_i(0)$ represents the portfolio weight\footnote{Note that the quantity $u_i(0)/S_i(0)$ corresponds to the number of shares traded for Asset $i$ at stage $k=0$.} and 
$
TC(u(0)) := \sum_{i=1}^m TC_i(u_i(0)) \in [0,V(0))
$
with~$TC_i(\cdot)$ being a continuous convex function representing the transaction costs for asset $i$ based on the trade amount~$u_i$. This cost model reflects realistic cost structures that cover a wide range of real-world transaction cost behaviors such as linear or piecewise-linear transaction~costs; see~\cite{davis1990portfolio, korn1998portfolio}. 

\begin{remark} \rm
The parameter $n \geq 1$ captures the impact of rebalancing frequency where $n=1$ corresponds to the highest possible rebalancing frequency, but it may accumulate more transaction costs.
As $n$ increases, rebalancing occurs less frequently, which reduces transaction costs.
\end{remark}

Set $w_i := w_i(0)$ and  we take portfolio weight vector~by 
$$
w := [w_{1}\  \cdots\ w_{n}]^\top \in \mathcal{W} := \left\{ w \in \mathbb{R}^m : \mathbf{1}^\top w = 1, w \succeq 0 \right\}
$$
where $\mathbf{1}$ is the one-vector and $w \succeq 0$ means $w_i \geq 0$ for all~$i = 1, \dots, m$.

To reflect the effect of portfolio rebalancing, we 
follow previous research in~\cite{hsieh2023asymptotic} and work with the $n$-period compound returns~$\mathcal{X}_{n,i}$ of each asset $i \in \{1, 2, \dots, m\}$, defined as 
$$
\mathcal{X}_{n,i} : = \frac{S_i(n) - S_i(0)}{S_i(0)} = \prod_{k=0}^{n-1}(1+X_i(k))-1,
$$
where $X_{i}(k):=\frac{S_i(k+1) - S_i(k)}{S_i(k)}$. We denote the random vector of $n$-period compound returns of $m$ assets, by~$\mathcal{X}_n:=[\mathcal{X}_{n,1} \ \mathcal{X}_{n,2} \ \cdots\ \mathcal{X}_{n,m}]^\top$ and define the support set of $\mathcal{X}_n$ by
\begin{align} \label{eq: support set of compound return}
\mathfrak{X} :=\{\mathcal{X}_n: \mathcal{X}_{\min, i} \leq \mathcal{X}_{n, i} \leq \mathcal{X}_{\max,i} ,\ i = 1, \dots, m\},
\end{align}
where $\mathcal{X}_{\min,i} := ( 1 + X_{\min,i})^{n} - 1 > -1$ and $\mathcal{X}_{\max, i} :=(1+X_{\max, i})^{n} - 1 < \infty$ for all $n\geq 1$.
In the sequel, we treat the number of steps between rebalancing, $n \geq 1$, as a predetermined parameter that is set by the investor.

Substituting control or investor's policy $u_i(0) = w_i(0) V(0)$ into the above stochastic equation~\eqref{eq: freq-dependent account dynamics}, we obtain
{\small
\begin{align} 
	V(n) 
	&= V(0) \left( 1 + \sum_{i=1}^{m} w_i(0) \left(\frac{S_i(n) - S_i(0)}{S_i(0)} \right) - \frac{TC(u(0))}{V(0)} \right) \notag \\
	&= V(0) \left( 1 + w^\top  {\mathcal{X}}_n  - \frac{{TC}(u(0))}{V(0)}\right) \notag \\
	&= V(0) \left( c(w)  + w^\top  {\mathcal{X}}_n \right)  \label{eq: log-optimal account value with transaction cost rebalancing}
\end{align}
}where $c(w) : = 1 - \frac{{TC}(u(0))}{V(0)}$, which is a deterministic concave function. To emphasize the dependence on portfolio weight $w$, we may sometimes write $V_w(n)$ instead of~$V(n)$. 
In practice, before deriving any optimal policy, it is important to ensure that the trade can never go bankrupt. This issue is addressed in Section~\ref{subsection: Robustly Survivable Trades and Feasible Controls}.

\subsection{Wasserstein Ambiguity Set}
As mentioned in Section~\ref{section: Introduction}, the true distribution of asset returns is often unknown or estimated from historical data. Traditional robust optimization tends to be overly conservative by focusing on worst-case scenarios. In contrast, the Wasserstein metric offers a more flexible, data-driven approach to account for distributional ambiguity.
Specifically, let~$\mathcal{M}( { \mathfrak{X} } )$  be the space of all probability distributions~$\mathbb{F}$ supported on~$  { \mathfrak{X} } $ with $\mathbb{E}^\mathbb{F}[ \| {\mathcal{X}}_n\| ] = \int \| {\mathcal{X}}_n\| d\mathbb{F} < \infty$. We are ready to define the Wasserstein metric as follows:

\begin{definition}[Wasserstein Metric] \label{definition:Wasserstein metric with transaction costs during rebalancing} \rm
	Given any two distributions $\mathbb{F}^1, \mathbb{F}^2 \in \mathcal{M}( { \mathfrak{X} })$, the (Type-1) \emph{Wasserstein metric}~$d: \mathcal{M}( { \mathfrak{X} }) \times \mathcal{M}( { \mathfrak{X} }) \to \mathbb{R}$ is defined~by
	\begin{align*}
		d(\mathbb{F}^1, \mathbb{F}^2)
		& := \inf_\Pi \left\{ \left( \mathbb{E}_{( \mathcal{X}^1, \mathcal{X}^2) \sim \Pi} [\|\mathcal{X}^1 - \mathcal{X}^2\|] \right) \right\}
	\end{align*}
	where $\Pi$ is a joint distribution of $\mathcal{X}^1$ and $\mathcal{X}^2$ with marginal distributions $\mathbb{F}^1$ and $\mathbb{F}^2$, respectively.
\end{definition}

Consistent with~\cite{calafiore2013direct}, \cite{mohajerin2018data} and \cite{li2023wasserstein},
suppose that $\widehat{\mathcal{X}}_1, \dots, \widehat{\mathcal{X}}_N$ represent $N$ random samples of finite random vector~${\mathcal{X}}_n \sim \mathbb{F}$. The associated empirical distribution, denoted by $\widehat{\mathbb{F}}$ is given by  $\widehat{\mathbb{F}} := \frac{1}{N} \sum_{j=1}^{N} \delta_{\widehat{\mathcal{X}}_j}$ where $\delta_{\widehat{\mathcal{X}}_j}$ is the Dirac delta function. Henceforth, for notational simplicity, we may use $\widehat{\mathcal{X}}$ to denote a typical element of the iid random samples instead of specifying~$\widehat{\mathcal{X}}_j$ for~$j \in \{1,\dots, N\}$. Similarly, we may sometimes denote~${\mathcal{X}}_n$ as~${\mathcal{X}}$.

Let $\varepsilon \geq 0$. Using the  Wasserstein metric in Definition~\ref{definition:Wasserstein metric with transaction costs during rebalancing}, we construct the ambiguity set  consisting of distributions of radius $\varepsilon$ centered at the empirical distribution~$\widehat{\mathbb{F}}$, i.e., the \emph{Wasserstein ball}, call it  $\mathcal{B}_\varepsilon( \widehat{\mathbb{F}} )$, defined by
\begin{align} \label{def: Wasserstein ball}
\mathcal{B}_\varepsilon( \widehat{\mathbb{F}} ) := \left\{ \mathbb{F} \in \mathcal{M}( { \mathfrak{X} } ) : d(\mathbb{F}, \widehat{\mathbb{F}}) \leq \varepsilon \right\}.
\end{align}

\begin{remark} \rm
Note that if $\varepsilon = 0$, then $\mathcal{B}_\varepsilon( \widehat{\mathbb{F}} )  = \{ \widehat{\mathbb{F}}\}$, a singleton empirical distribution.
The reader is referred to \cite{mohajerin2018data, xie2024distributionally} and the references therein for further details on this topic.
\end{remark}

\subsection{Robustly Survivable Trades and Feasible Controls} \label{subsection: Robustly Survivable Trades and Feasible Controls}
Similar to the stability requirement in classical control systems, 
ensuring that trades are survivable, i.e.,~$ \mathbb{P}^\mathbb{F} (V(n) > 0) = 1 $ for all $n \geq 1$ is crucial in financial systems.
We extend the concept of a ``survivable trade'' to a ``robustly survivable trade'' by leveraging the Wasserstein metric. 


\begin{lemma}[Robustly Survivable Trades] \label{lemma: robust survival condition with transaction costs rebalancing} \it 
Let~$ V(0) > 0 $ and $w \in \mathcal{W}$ be given. If the weight vector $ w $ satisfies
$
\sum_{i=1}^{m} w_i {\mathcal{X}}_{\min, i} + c(w) > 0,
$
then, for any $\varepsilon \geq 0$ and for all distributions $ \mathbb{F} \in  \mathcal{B}_\varepsilon( \widehat{\mathbb{F}} ) $, we have
\[
\mathbb{P}^\mathbb{F}( V(n) > 0 ) = 1 \; \text{ for all } n \geq 1.
\]
\end{lemma}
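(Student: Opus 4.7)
The plan is to reduce the survival condition to a pointwise inequality on the support set $\mathfrak{X}$, and then exploit the fact that every distribution in the Wasserstein ball is, by the definition of $\mathcal{M}(\mathfrak{X})$, supported on $\mathfrak{X}$. Since this latter observation renders the radius $\varepsilon$ essentially irrelevant to the argument, the proof should be short and structural rather than technical.

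First, I would invoke the closed-form account dynamics in equation~\eqref{eq: log-optimal account value with transaction cost rebalancing}, which gives
\[
V(n) = V(0)\bigl(c(w) + w^\top \mathcal{X}_n\bigr).
\]
Because $V(0) > 0$ by assumption, the event $\{V(n) > 0\}$ coincides with the event $\{c(w) + w^\top \mathcal{X}_n > 0\}$, so it suffices to show that the latter has probability one under every $\mathbb{F} \in \mathcal{B}_\varepsilon(\widehat{\mathbb{F}})$.

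Next, I would note that $\mathcal{B}_\varepsilon(\widehat{\mathbb{F}}) \subseteq \mathcal{M}(\mathfrak{X})$ by construction, so any $\mathbb{F}$ in the Wasserstein ball is supported on the box $\mathfrak{X}$ defined in~\eqref{eq: support set of compound return}. Consequently, $\mathbb{F}$-almost surely we have $\mathcal{X}_{n,i} \geq \mathcal{X}_{\min,i}$ for each $i$. Combining this with $w \in \mathcal{W}$, i.e.\ $w_i \geq 0$, gives the pointwise bound
\[
w^\top \mathcal{X}_n \;=\; \sum_{i=1}^m w_i \mathcal{X}_{n,i} \;\geq\; \sum_{i=1}^m w_i \mathcal{X}_{\min,i} \qquad \mathbb{F}\text{-a.s.}
\]
Adding $c(w)$ and invoking the hypothesis $\sum_i w_i \mathcal{X}_{\min,i} + c(w) > 0$ yields $c(w) + w^\top \mathcal{X}_n > 0$ $\mathbb{F}$-a.s., which is exactly the desired conclusion.

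There is no real obstacle here: the statement is essentially a worst-case pointwise argument dressed up in distributional language, and the role of the Wasserstein ball is only to enforce the support constraint. The only subtlety worth flagging is that the argument holds \emph{uniformly} in $\varepsilon$ precisely because the ambiguity set is defined as a subset of $\mathcal{M}(\mathfrak{X})$ rather than of $\mathcal{M}(\mathbb{R}^m)$; if one were to enlarge the ambient space (e.g.\ allowing distributions with tails outside $\mathfrak{X}$), then the Wasserstein radius would start mattering and a probabilistic tail bound would be required. I would briefly flag this as a remark after the proof so the reader sees why $\varepsilon$ drops out.
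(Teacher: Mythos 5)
Your proof is correct and follows essentially the same route as the paper's: both reduce to the pointwise bound $w^\top \mathcal{X}_n \geq \sum_{i=1}^m w_i \mathcal{X}_{\min,i}$ (valid $\mathbb{F}$-a.s.\ because every $\mathbb{F}$ in the ball is supported on $\mathfrak{X}$ and $w \succeq 0$), then invoke the hypothesis and $V(0)>0$. Your explicit remark on why $\varepsilon$ drops out is a nice addition but the underlying argument is identical.
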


\begin{proof}
See Appendix~\ref{appendix: proofs in section formulation}.
\end{proof}

\begin{remark}
In the context of the cost-sensitive DRO problem, ensuring robust survivability means that the investor can select portfolio weights $ w $ that guarantee the account value remains positive regardless of the true underlying distribution within the ambiguity set. This adds an extra layer of protection against model uncertainty and adverse market conditions.
\end{remark}

Following Lemma~\ref{lemma: robust survival condition with transaction costs rebalancing}, we further require the portfolio weight vector $w:=[w_1\ \cdots\ w_m]^\top \in \mathcal{W}_{\rm s}$ with $\mathcal{W}_{\rm s}$ defined~as 
$$
\mathcal{W}_{\rm s} := \mathcal{W} \cap \left\{ w \in \mathbb{R}^m :  \sum_{i=1}^{m} w_i {\mathcal{X}}_{\min, i} + c(w) > 0 \right\}.
$$
It is obvious that $\mathcal{W}_{\rm s} \subseteq \mathcal{W}$.
In the following research, we consider the closure of $\mathcal{W}_{\rm s}$, denoted as $\overline{\mathcal{W}}_{\rm s}$.

\subsection{Cost-Sensitive DRO Problem}

\begin{problem}[Cost-Sensitive DRO Problem] \rm
Fix $n \geq 1$ and $\varepsilon \geq 0$.
The cost-sensitive distributionally robust log-optimal portfolio problem with the Wasserstein metric is given by
\begin{align*}
	&\max_{w \in \overline{\mathcal{W}}_{\rm s}} \; 
	\inf_{\mathbb{F} \in \mathcal{B}_\varepsilon( \widehat{\mathbb{F}} )}\, \frac{1}{n}\, \mathbb{E}^{\mathbb{F}} \left[ \log \frac{V_w (n)}{V(0)} \right] 
\end{align*}
With Equation~\eqref{eq: log-optimal account value with transaction cost rebalancing}, the above problem can be reduced 
\begin{align}\label{problem: DRO ELG with transaction costs during rebalancing}
	&\max_{w \in \overline{\mathcal{W}}_{\rm s}}\,{\inf_{\mathbb{F} \in \mathcal{B}_\varepsilon( \widehat{\mathbb{F}} )}}\, \frac{1}{n}\, \mathbb{E}^{\mathbb{F}} \left[\log \left(c(w) + w^\top {\mathcal{X}}_n \right) \right].
\end{align}
\end{problem}
Note that the inner minimization problem seeks to find an optimal distribution $\mathbb{F}$ over the Wasserstein ball, which is, in principle, an infinite-dimensional optimization problem.

\begin{remark} \rm
Suppose that the transaction cost model takes a proportional cost structure with $TC_i(u_i(0)):= c_i w_i V(0)$.
When the radius $\varepsilon = 0$, indicating no ambiguity exists and full trust is placed in historical data, the problem reduces to solving the classical log-optimal problem $\max_{w \in \mathcal{W}}\, \frac{1}{n}\, \mathbb{E}^{\widehat{\mathbb{F}}} [ \log ( 1  + w^\top {\mathcal{X}} ) ]$, see \cite{hsieh2023asymptotic, wong2023frequency}.
\end{remark}

\section{Theoretical Results} \label{section: Main Results}
This section shows that the infinite-dimensional cost-sensitive distributional robust log-optimal problem~\eqref{problem: DRO ELG with transaction costs during rebalancing} can be approximated to a finite convex program with a concave objective, thereby facilitating computational tractability.

\begin{theorem}[Convex Approximation] \label{theorem:convex reduction with non-zero transaction costs during rebalancing} \it
Fix $n \geq 1.$ For any $\varepsilon > 0$, the cost-sensitive DRO log-optimal problem~\eqref{problem: DRO ELG with transaction costs during rebalancing} can be approximated to the convex program with concave objective function:
\begin{align} 
	&\sup_{w, \lambda, s_j, z_j}\, \frac{1}{n}\, \left(- \lambda\varepsilon + \frac{1}{N} \sum_{j=1}^{N} {s}_j \right) \notag \\ 
	&\text{\rm s.t. }  \
	\medmath{ \min_{ {\mathcal{X}} \in { \mathfrak{X} }} \left[ \log \left( c(w) + w^\top {\mathcal{X}}\right) +  z_j^\top ( {\mathcal{X}} -\widehat{\mathcal{X}}_j ) \right] \geq  {s}_j, \;  \forall j } \label{problem:DRO-ELG dual with transaction costs during rebalancing} \\
	& \qquad \lambda \geq 0, \notag\\ 
	& \qquad { \| z_j\|_* \leq \lambda}, \; j=1, \dots, N \notag,\\
	& \qquad w \in \overline{\mathcal{W}}_{\rm s} \notag
\end{align}
where $z_j \in \mathbb{R}^m$ and $\| z_j\|_*$ is the dual norm of~$z_j$.
\end{theorem}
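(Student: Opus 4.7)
The plan is to dualize the inner worst-case expectation over the Wasserstein ball and then linearize the resulting transport cost using the dual norm. Fix $w \in \overline{\mathcal{W}}_{\rm s}$ and write $\ell_w(\mathcal{X}) := \log(c(w) + w^\top \mathcal{X})$. Lemma~\ref{lemma: robust survival condition with transaction costs rebalancing} guarantees that $c(w) + w^\top \mathcal{X} > 0$ on the compact set $\mathfrak{X}$, so $\ell_w$ is continuous and bounded there. First, I would invoke the Kantorovich--Rubinstein strong duality for Wasserstein DRO (as established in \cite{mohajerin2018data}) to write
\begin{align*}
\inf_{\mathbb{F}\in\mathcal{B}_\varepsilon(\widehat{\mathbb{F}})} \mathbb{E}^{\mathbb{F}}[\ell_w(\mathcal{X})] = \sup_{\lambda \geq 0}\bigg\{-\lambda\varepsilon + \frac{1}{N}\sum_{j=1}^N \inf_{\mathcal{X}\in\mathfrak{X}}\big[\ell_w(\mathcal{X}) + \lambda\|\mathcal{X} - \widehat{\mathcal{X}}_j\|\big]\bigg\}.
\end{align*}
This converts the infinite-dimensional inner problem into a finite-dimensional one parameterized by $\lambda$ and the sample points.

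Next, I would exploit the dual-norm identity $\lambda\|y\| = \sup_{\|z\|_* \leq \lambda} z^\top y$ with $y = \mathcal{X} - \widehat{\mathcal{X}}_j$ to obtain
\begin{align*}
\inf_{\mathcal{X}}\big[\ell_w(\mathcal{X}) + \lambda\|\mathcal{X} - \widehat{\mathcal{X}}_j\|\big] = \inf_{\mathcal{X}}\sup_{\|z_j\|_* \leq \lambda}\big[\ell_w(\mathcal{X}) + z_j^\top(\mathcal{X} - \widehat{\mathcal{X}}_j)\big].
\end{align*}
Applying the max--min inequality (weak duality) yields the lower bound
\begin{align*}
\inf_{\mathcal{X}}\sup_{\|z_j\|_* \leq \lambda}\big[\,\cdot\,\big] \;\geq\; \sup_{\|z_j\|_* \leq \lambda}\inf_{\mathcal{X}}\big[\ell_w(\mathcal{X}) + z_j^\top(\mathcal{X} - \widehat{\mathcal{X}}_j)\big],
\end{align*}
which is precisely the source of the word ``approximation'' in the theorem. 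Promoting $z_j$ to decision variables and introducing epigraph scalars $s_j$ to encode the inner infima produces exactly the semi-infinite constraint $s_j \leq \min_{\mathcal{X}\in\mathfrak{X}}[\log(c(w) + w^\top \mathcal{X}) + z_j^\top(\mathcal{X} - \widehat{\mathcal{X}}_j)]$ together with $\|z_j\|_* \leq \lambda$ and $\lambda \geq 0$.

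Finally, I would verify that the resulting program is convex with concave objective. The objective $\frac{1}{n}(-\lambda\varepsilon + \frac{1}{N}\sum_j s_j)$ is linear in $(\lambda, s_1,\dots,s_N)$ and thus concave. Since each $TC_i$ is convex, $c(w) = 1 - TC(wV(0))/V(0)$ is concave in $w$, so $c(w) + w^\top \mathcal{X}$ is concave in $w$ for every fixed $\mathcal{X}$; composing with the concave nondecreasing $\log$ preserves concavity, and the $z_j$-term is linear, so $(w, z_j) \mapsto \log(c(w) + w^\top \mathcal{X}) + z_j^\top(\mathcal{X} - \widehat{\mathcal{X}}_j)$ is jointly concave for each $\mathcal{X} \in \mathfrak{X}$. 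Taking a pointwise infimum over $\mathcal{X}$ preserves concavity, so the constraint $s_j \leq (\text{concave in }(w,z_j))$ defines a convex set. The dual-norm ball $\|z_j\|_* \leq \lambda$ is convex, and $\overline{\mathcal{W}}_{\rm s}$ is convex because the defining inequality $\sum_i w_i\mathcal{X}_{\min,i} + c(w) \geq 0$ is a super-level set of a concave function intersected with the simplex $\mathcal{W}$. The main obstacle is the weak-duality step: because $\ell_w$ is concave but not affine in $\mathcal{X}$, Sion's minimax theorem does not apply (quasi-convexity in $\mathcal{X}$ fails), so equality is not expected and the reformulation is genuinely a safe lower-bound approximation rather than an exact recasting.
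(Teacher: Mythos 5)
Your proposal is correct and follows essentially the same route as the paper: strong duality for the inner worst-case expectation, the dual-norm identity $\lambda\|y\|=\sup_{\|z\|_*\leq\lambda}z^\top y$, a one-sided max--min inequality yielding the conservative approximation, epigraphical variables $s_j$, and a convexity verification. The only substantive difference is that you invoke the Wasserstein strong-duality result of Mohajerin Esfahani and Kuhn as a black box, whereas the paper rederives it via the decomposition $\Pi=\frac{1}{N}\sum_{j}\delta_{\widehat{\mathcal{X}}_j}\otimes\mathbb{F}_j$, Slater's condition, and the fact that $\mathcal{M}(\mathfrak{X})$ contains all Dirac measures; your closing remark that Sion's theorem is inapplicable (so only a safe lower bound is obtained) correctly pinpoints why the statement is an approximation rather than an exact reformulation.
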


\begin{proof}
Using the Wasserstein distance described in Definition~\ref{definition:Wasserstein metric with transaction costs during rebalancing}, we first re-express the inner minimization problem in Equation~\eqref{problem: DRO ELG with transaction costs during rebalancing} using the duality technique. Specifically, we note that
\begin{align}
	&	\inf_{\mathbb{F} \in \mathcal{B}_\varepsilon( \widehat{\mathbb{F}})} \mathbb{E}^{\mathbb{F}} \left[\log ( c(w) + w^\top {\mathcal{X}} ) \right] \label{problem: primal infinite-convex (linear program) problem}
	\\
	&\qquad = \begin{cases}
		\displaystyle\inf_{ \mathbb{F} \in \mathcal{M}( { \mathfrak{X} }) }\, \displaystyle\int \log \left( c(w) + w^\top  {\mathcal{X}}\right) d\mathbb{F}\\
		\text{s.t. } 
		d( \mathbb{F}, \widehat{\mathbb{F}}) \leq \varepsilon\\
	\end{cases} \notag \\
	&\qquad = \begin{cases}
		\displaystyle\inf_{\Pi, \mathbb{F}}\, \displaystyle\int \log \left( c(w) + w^\top  {\mathcal{X}}\right) d\mathbb{F}\\
		\text{s.t. } 
		\int \| {\mathcal{X}} - \mathcal{X}^{\prime} \| d\Pi \leq \varepsilon\\
	\end{cases}\label{problem:the inner minimization problem with transaction costs during rebalancing}
\end{align}
where $\Pi$ is a joint distribution of ${\mathcal{X}}$ and $\mathcal{X}^{\prime}$ with marginals $\mathbb{F}$ and $\widehat{\mathbb{F}}$, respectively. 
Following the fact that $\widehat{\mathbb{F}} = 
\frac{1}{N} \sum_{j=1}^N \delta_{\widehat{\mathcal{X}}_j}$ and the law of total probability, the joint probability distribution $\Pi$ of ${\mathcal{X}}$ and $\mathcal{X}^{\prime}$ can be constructed from the marginal distribution $\widehat{\mathbb{F}}$ of $\mathcal{X}^{\prime}$ and the conditional distributions $\mathbb{F}_j$ of ${\mathcal{X}}$ given $\mathcal{X}^{\prime} = \widehat{\mathcal{X}}_j$ for~$j \leq N$. That is, we have 
$
\Pi = \frac{1}{N}\sum_{j=1}^{N}\delta_{\widehat{\mathcal{X}}_j} \otimes \mathbb{F}_j
$
and 
$\mathbb{F} = \frac{1}{N}\sum_{j=1}^N {\mathbb{F}}_j$; see also Lemma~\ref{lemma: Decomposition of Marginal Distribution} for a detailed justification. 
Hence, Problem~\eqref{problem:the inner minimization problem with transaction costs during rebalancing} can be further rewritten as
\begingroup
\allowdisplaybreaks
\begin{align}  
	&\begin{cases}
		\displaystyle \inf_{\mathbb{F}_j \in \mathcal{M}( \mathcal{ { \mathfrak{X} } } ), \, \forall j} \, \frac{1}{N}\displaystyle\sum_{j=1}^{N} \int \log ( c(w) + w^\top  {\mathcal{X}} ) d\mathbb{F}_j \\
		\text{s.t. } \frac{1}{N}\displaystyle\sum_{j=1}^{N} \int \| {\mathcal{X}} - \widehat{\mathcal{X}}_j\|  d\mathbb{F}_j \leq \varepsilon
	\end{cases}\\
	&= \inf_{\mathbb{F}_j \in \mathcal{M}( { \mathfrak{X} }), \forall j} \, \sup_{\lambda \geq 0} \ \frac{1}{N} \sum_{j=1}^{N} \int \log ( c(w) + w^\top  {\mathcal{X}} )d{\mathbb{F}_j}    \notag \\
	& \qquad \qquad \qquad  - \lambda \left( \varepsilon - \frac{1}{N} \sum_{j=1}^{N} \int \| {\mathcal{X}} - \widehat{\mathcal{X}}_j\| d{\mathbb{F}_j}\right)  \notag \\ 
	& \medmath{ \medmath{ = \inf_{\mathbb{F}_j \in \mathcal{M}( { \mathfrak{X} }), \forall j}\, \sup_{\lambda \geq 0} \left\{ -\lambda \varepsilon + \frac{1}{N} \sum_{j=1}^{N} \int \left[\log (c(w) + w^\top {\mathcal{X}} ) + \lambda \| {\mathcal{X}} - \widehat{\mathcal{X}}_j\| \right] d \mathbb{F}_j \right\} }  } \notag \\ 
	&  \medmath{  \medmath{ = \sup_{\lambda \geq 0}\, \inf_{\mathbb{F}_j \in \mathcal{M}( { \mathfrak{X} }), \forall j} \left\{-\lambda \varepsilon + \frac{1}{N} \sum_{j=1}^{N} \int \left[\log ( c(w) + w^\top {\mathcal{X}} ) + \lambda \| {\mathcal{X}} - \widehat{\mathcal{X}}_j\| \right] d{\mathbb{F}_j} \right\} } } \label{eq:turn into inf_sup with transaction costs during rebalancing}\\ 
	&  \medmath{  \medmath{ = \sup_{\lambda \geq 0}\,  \left\{-\lambda \varepsilon + \frac{1}{N} \sum_{j=1}^{N} \inf_{\mathbb{F}_j \in \mathcal{M}( { \mathfrak{X} })} \int \left[\log ( c(w) + w^\top {\mathcal{X}} ) + \lambda \| {\mathcal{X}} - \widehat{\mathcal{X}}_j\| \right] d{\mathbb{F}_j} \right\} } }  \notag \\ 
	&  \medmath{ = \sup_{\lambda \geq 0} \left\{ -\lambda\varepsilon + \frac{1}{N} \sum_{j=1}^{N} \inf_{ {\mathcal{X}} \in { { \mathfrak{X} }}} \left[\log \left( c(w) + w^\top  {\mathcal{X}}\right) + \lambda {\| {\mathcal{X}} - \widehat{\mathcal{X}}_j\|} \right] \right\}}, \label{eq:before auxilary variables with transaction costs during rebalancing} 
\end{align}
\endgroup
where~\eqref{eq:turn into inf_sup with transaction costs during rebalancing} holds since the primal problem~\eqref{problem: primal infinite-convex (linear program) problem}, as an infinite-dimensional convex program, has a strictly feasible solution $\widehat{ \mathbb{F} }$ that satisfies 
$d(\widehat{ \mathbb{F} }, \widehat{\mathbb{F}} ) = 0 < \varepsilon$. Hence, Slater's condition holds and strong duality follows.
Moreover, Equality~\eqref{eq:before auxilary variables with transaction costs during rebalancing} is proven by using the fact that~$\mathcal{M}( {  \mathfrak{X} })$ contains all the Dirac distribution on $ {  \mathfrak{X} }$; see Lemma~\ref{lemma: equity before auxilary variables with transaction costs during rebalancing}.

Now, by introducing epigraphical auxiliary variables 
$ {s}_j$ for~$j = 1, \dots, N$, we rewrite Equation~\eqref{eq:before auxilary variables with transaction costs during rebalancing} as follows:
\begin{align}
	&\begin{cases}
		\displaystyle\sup_{\lambda,  {s}_j} \, -\lambda\varepsilon + \frac{1}{N} \displaystyle\sum_{j=1}^{N}  {s}_j \\
		\text{s.t. } \displaystyle\inf_{ {\mathcal{X}} \in  { \mathfrak{X} }}  \left[ \log \left(c(w) + w^\top  {\mathcal{X}}\right) + \lambda {\| {\mathcal{X}} - \widehat{\mathcal{X}}_j\|} \right] \geq  {s}_j, \, \forall j \\
		\qquad \lambda \geq 0 \notag
	\end{cases} \\
	& = \begin{cases} 
		\label{eq: double dual norm representation}
		\displaystyle\sup_{\lambda, {s}_j} \, -\lambda\varepsilon + \frac{1}{N} \displaystyle\sum_{j=1}^{N}  {s}_j \\
		\text{s.t. } \medmath{ \medmath{ \inf_{ {\mathcal{X}} \in  { \mathfrak{X} }}  \left[\log \left( c(w) + w^\top  {\mathcal{X}} \right) + \max_{\| z_j \|_* \leq \lambda} z_j^\top \left( {\mathcal{X}} - \widehat{\mathcal{X}}_j \right) \right] \geq  {s}_j , \, \forall j} }  \\
		\qquad \lambda \geq 0  
	\end{cases}
\end{align}
where Equality~\eqref{eq: double dual norm representation} holds by using the fact that the double dual norm is equal to the ordinary norm in finite-dimensional space, i.e., $\|a\|_{**} = \| a \|$ for $a \in \mathbb{R}^m$, see \cite{beck2017first}.
Hence, we rewrite~\eqref{eq: double dual norm representation} further as follows: 
\begingroup
\allowdisplaybreaks
\begin{align}
	&\begin{cases} 
		\displaystyle\sup_{\lambda, {s}_j}  \, -\lambda\varepsilon + \frac{1}{N} \displaystyle\sum_{j=1}^{N}  {s}_j \\
		\text{s.t. } \medmath{ \medmath{ \inf_{ {\mathcal{X}} \in  { \mathfrak{X} }}  
				\displaystyle \max_{ \|z_j\|_* \leq \lambda} \left[ \log \left( c(w) + w^\top  {\mathcal{X}} \right) +  z_j^\top ( {\mathcal{X}} - \widehat{\mathcal{X}}_j) \right] \geq  {s}_j , \, \forall j} } \\
		\qquad \lambda \geq 0   \notag
	\end{cases}\\
	&	\geq \begin{cases} \label{ineq:turn into min_sup with transaction costs during rebalancing}
		\displaystyle\sup_{\lambda, {s}_j} \, -\lambda\varepsilon + \frac{1}{N} \displaystyle\sum_{j=1}^{N}  {s}_j \\
		\text{s.t. } \medmath{ \medmath{ 
				\displaystyle \max_{ \|z_j\|_* \leq \lambda} \inf_{ {\mathcal{X}} \in  { \mathfrak{X} }}   \left[ \log \left( c(w) + w^\top  {\mathcal{X}} \right) +  z_j^\top ( {\mathcal{X}} - \widehat{\mathcal{X}}_j) \right] \geq  {s}_j , \, \forall j} } \\
		\qquad \lambda \geq 0
	\end{cases} \\
	& = \begin{cases}
		\displaystyle \sup_{\lambda, {s}_j,z_j} \, -\lambda\varepsilon + \frac{1}{N} \displaystyle \sum_{j=1}^{N}  {s}_j \\
		\text{s.t. }   \
		\medmath{ \inf_{ {\mathcal{X}} \in  { \mathfrak{X} }} \left[ \log \left( c(w) + w^\top  {\mathcal{X}} \right) + z_j^\top ( {\mathcal{X}} - \widehat{\mathcal{X}}_j) \right] \geq  {s}_j, \, \forall j} \\
		\qquad \lambda \geq 0 \\
		\qquad { \|z_j\|_* \leq \lambda} , \quad j=1,\dots,N \notag
	\end{cases}
\end{align}
\endgroup
where Inequality~\eqref{ineq:turn into min_sup with transaction costs during rebalancing} holds via minimax inequality on the constraint.
Since the quantity $\log \left(c(w) + w^\top  x \right) + z_j^\top ( x - \widehat{\mathcal{X}}_j)$ is continuous in $ x$ over a compact domain~$ { \mathfrak{X} }$, the Weierstrass Extreme Value Theorem indicates that the minimum is attained and the infimum operator in the constraint above can be replaced by the minimum operator. Therefore, in combination with the arguments above, we have
\begin{align}
	\begin{cases}
		\displaystyle\sup_{\lambda, {s}_j,z_j} \, - \lambda\varepsilon + \frac{1}{N} \displaystyle\sum_{j=1}^{N}  {s}_j \\
		\text{s.t. }  \
		\medmath{\min_{ {\mathcal{X}} \in  { \mathfrak{X} }} \left[ \log \left( c(w) + w^\top  {\mathcal{X}}\right) +  z_j^\top ( {\mathcal{X}} - \widehat{\mathcal{X}}_j) \right] \geq  {s}_j , \; \forall j  } \\
		\qquad \lambda \geq 0 \\
		\qquad {\|z_j\|_* \leq \lambda} , \quad j=1,\dots,N \notag
	\end{cases} 
\end{align}
Hence, replacing the inner minimization problem~\eqref{problem: DRO ELG with transaction costs during rebalancing} by the above optimization problem, we obtain the approximation:
\begin{align} 
	&\sup_{w, \lambda, {s}_j, z_j}\, \frac{1}{n}\, \left( - \lambda\varepsilon + \frac{1}{N} \displaystyle\sum_{j=1}^{N}  {s}_j \right) \label{problem: approximate dual DRO} \\ 
	& \text{s.t. }  \
	\medmath{ \min_{ {\mathcal{X}} \in  { \mathfrak{X} }} \left[ \log \left( c(w) + w^\top  {\mathcal{X}} \right) +  z_j^\top ( {\mathcal{X}} - \widehat{\mathcal{X}}_j) \right] \geq  {s}_j , \quad \forall j} \notag\\
	&\qquad \lambda \geq 0 \notag\\ 
	&\qquad {\|z_j\|_* \leq \lambda} , \quad j=1,\dots,N \notag,\\
	& \qquad  w \in \overline{\mathcal{W}}_{\rm s} \notag
\end{align}
where   $z_j \in \mathbb{R}^m$ and $\| z_j\|_*$ is the dual norm of $z_j$.

To complete the proof, it remains to show that Problem~\eqref{problem: approximate dual DRO} is a convex program. 
Indeed, for fixed $\varepsilon > 0$, it is readily verified that $- \lambda\varepsilon + \frac{1}{N} \sum_{j=1}^{N}  {s}_j$ is affine in~$\lambda$ and~$ {s}_j$ for every $j \leq N$, which implies the objective function is a concave function. Additionally, using the robustly survivable trade condition from Lemma~\ref{lemma: robust survival condition with transaction costs rebalancing}, we have $c(w) + w^\top \mathcal{X} >0$. Moreover, since $c(w) + w^\top  {\mathcal{X}}$ is concave in~$w$ and $ {\mathcal{X}}$, taking the logarithm function, which is concave and increasing, yields a concave composition function; see \cite[Chapter~3.2.4]{boyd2004convex}. Next,~$z_j^\top ( {\mathcal{X}} - \widehat{\mathcal{X}}_j)$ is affine in $ {\mathcal{X}}$ and in $z_j$ for every $j \leq N$; hence is concave. Taking the minimum function preserves the concavity. Then, the minimum term forms a convex constraint set. Moreover, the constraints $\lambda \geq 0$  and $\|z_j\|_* \leq \lambda$ are convex. On the other hand, $\overline{\mathcal{W}}_{\rm s}$ is a convex set since the intersection of two convex sets preserves convexity. Hence, the above problem is a convex program, and the proof is~complete.
\end{proof}

The next corollary indicates that the semi-infinite convex approximation above can be further expressed in a finite convex formulation.

\begin{corollary}[Reduction of Semi-Infinite Constraint] \label{corollary: Reduction of Semi-Infinite Constraint}
Fix $n \geq 1.$ For any $\varepsilon > 0$, the convex approximation problem~\eqref{problem:DRO-ELG dual with transaction costs during rebalancing} is equivalent to the convex program with concave objective function:
\begin{align} 
	&\sup_{w, \lambda, s_j, z_j}\, \frac{1}{n}\, \left(- \lambda\varepsilon + \frac{1}{N} \sum_{j=1}^{N} {s}_j \right) \notag \\ 
	&\text{\rm s.t. }  \
	\medmath{ \min_{ {\mathcal{X}}^v \in {\rm Ext}({ \mathfrak{X} })} \left[ \log \left( c(w) + w^\top {\mathcal{X}}^v \right) +  z_j^\top ( {\mathcal{X}}^v - \widehat{\mathcal{X}}_j ) \right] \geq  {s}_j, \;  \forall j }   \\
	& \qquad \lambda \geq 0, \notag\\ 
	& \qquad { \| z_j\|_* \leq \lambda}, \; j=1, \dots, N \notag,\\
	& \qquad w \in \overline{\mathcal{W}}_{\rm s} \notag
\end{align}
where $z_j \in \mathbb{R}^m$, $\| z_j\|_*$ is the dual norm of~$z_j$, and ${\rm Ext}({ \mathfrak{X} })$ is the set of extreme points of $\mathfrak{X}$.
\end{corollary}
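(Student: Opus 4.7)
The plan is to reduce the semi-infinite constraint by exploiting the structure of the support set $\mathfrak{X}$ and the concavity of the inner objective. Since $\mathfrak{X}$, as defined in~\eqref{eq: support set of compound return}, is the Cartesian product of closed intervals $[\mathcal{X}_{\min,i},\mathcal{X}_{\max,i}]$, it is a compact convex polytope (an $m$-dimensional hyperrectangle), whose set of extreme points ${\rm Ext}(\mathfrak{X})$ consists of the $2^m$ vertices obtained by choosing each coordinate to be either $\mathcal{X}_{\min,i}$ or $\mathcal{X}_{\max,i}$.

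First, I would fix $(w,z_j)$ with $w\in\overline{\mathcal{W}}_{\rm s}$ and define the inner function
\[
g_j(\mathcal{X}) := \log\bigl(c(w) + w^\top \mathcal{X}\bigr) + z_j^\top(\mathcal{X}-\widehat{\mathcal{X}}_j).
\]
The argument of the logarithm is affine in $\mathcal{X}$, and by Lemma~\ref{lemma: robust survival condition with transaction costs rebalancing} it remains strictly positive on $\mathfrak{X}$, so $\log(c(w)+w^\top\mathcal{X})$ is a concave function of $\mathcal{X}$ on $\mathfrak{X}$. The additional term $z_j^\top(\mathcal{X}-\widehat{\mathcal{X}}_j)$ is affine, hence concave, in $\mathcal{X}$. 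Therefore $g_j$ is concave and continuous on the compact convex polytope $\mathfrak{X}$.

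Next, I would invoke the classical result from convex analysis, see e.g.~\cite{boyd2004convex}, that a concave function attains its minimum over a nonempty compact convex polytope at (at least) one of its extreme points. Applying this to $g_j$ on $\mathfrak{X}$ yields
\[
\min_{\mathcal{X}\in\mathfrak{X}} g_j(\mathcal{X}) \;=\; \min_{\mathcal{X}^v\in{\rm Ext}(\mathfrak{X})} g_j(\mathcal{X}^v).
\]
Consequently, the semi-infinite constraint
$\min_{\mathcal{X}\in\mathfrak{X}} g_j(\mathcal{X}) \geq s_j$ in Problem~\eqref{problem:DRO-ELG dual with transaction costs during rebalancing} is equivalent, for each $j$, to the finite set of constraints $g_j(\mathcal{X}^v)\geq s_j$ for all $\mathcal{X}^v\in{\rm Ext}(\mathfrak{X})$. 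Since the objective, the constraints $\lambda\geq 0$, $\|z_j\|_*\leq\lambda$, and the set $\overline{\mathcal{W}}_{\rm s}$ are unchanged from Theorem~\ref{theorem:convex reduction with non-zero transaction costs during rebalancing}, the reformulated program retains its concave-objective/convex-constraint structure, and equivalence with~\eqref{problem:DRO-ELG dual with transaction costs during rebalancing} follows.

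I expect the main subtlety to lie in justifying the concavity-attains-minimum-at-a-vertex step cleanly: one must ensure $g_j$ is well-defined (i.e., $c(w)+w^\top\mathcal{X}>0$) on the entire polytope $\mathfrak{X}$, not merely on its relative interior. This is precisely where the condition $w\in\overline{\mathcal{W}}_{\rm s}$ together with Lemma~\ref{lemma: robust survival condition with transaction costs rebalancing} is used; one should verify that the strict positivity extends to the closure $\overline{\mathcal{W}}_{\rm s}$ so that $\log(\cdot)$ is finite and concave throughout $\mathfrak{X}$. Once this is in place, the remaining steps are standard, and the final formulation becomes a finite convex program in $(w,\lambda,s_j,z_j)$ with $N\cdot 2^m$ vertex constraints.
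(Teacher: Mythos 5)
Your proposal is correct and follows essentially the same route as the paper's proof: both define the inner function $\phi_j(\mathcal{X})=\log(c(w)+w^\top\mathcal{X})+z_j^\top(\mathcal{X}-\widehat{\mathcal{X}}_j)$, establish its concavity and continuity on the compact convex set $\mathfrak{X}$, and invoke the standard fact that a concave function attains its minimum over a compact convex set at an extreme point (the paper cites the Krein--Milman theorem together with \cite[Proposition 2.4.1]{bertsekas2009convex}) to replace the semi-infinite constraint by the vertex constraints. Your added remarks on the $2^m$ hyperrectangle vertices and on verifying strict positivity of $c(w)+w^\top\mathcal{X}$ over $\overline{\mathcal{W}}_{\rm s}$ are sensible refinements but do not change the argument.
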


\begin{proof} 
Define the function:
\[
\phi_j (\mathcal{X}) := \log \left( c(w) + w^\top {\mathcal{X}}\right) +  z_j^\top ( {\mathcal{X}} -\widehat{\mathcal{X}}_j )
\]
which is a continuous concave function over a convex compact set $\mathfrak{X}$. Hence, the minimum is attained by the Weierstrass Extremum Theorem.
Notably, since $\mathfrak{X}$ is convex compact, according to the Krein-Milman Theorem, the set $\mathfrak{X}$ can be expressed as a convex hull of its extreme points. That is, 
$
\mathfrak{X} ={\rm conv}({\rm Ext}(\mathfrak{X}))
$ 
where~${\rm Ext}({ \mathfrak{X} })$ is the set of extreme points of $\mathfrak{X}$.
Hence, all points in~$\mathfrak{X}$ can be represented as convex combinations of its extreme points. 
Additionally, the concave function~$\phi_j(\mathcal{X})$ that attains a minimum over compact convex set $\mathcal{X}$ attains the minimum at some extreme point $\mathcal{X}^v$ of $\mathfrak{X}$, see \cite[Proposition 2.4.1]{bertsekas2009convex}.  Therefore, the semi-infinite constraint
\begin{align*}
	\medmath{ \min_{ {\mathcal{X}} \in { \mathfrak{X} }} \left[ \log \left( c(w) + w^\top {\mathcal{X}}\right) +  z_j^\top ( {\mathcal{X}} -\widehat{\mathcal{X}}_j ) \right] \geq  {s}_j, \;  \forall j }   
\end{align*}
can be reduced to 
\[
\medmath{ \min_{ {\mathcal{X}}^v \in {\rm Ext}({ \mathfrak{X} })} \left[ \log \left( c(w) + w^\top {\mathcal{X}}^v \right) +  z_j^\top ( {\mathcal{X}}^v - \widehat{\mathcal{X}}_j ) \right] \geq  {s}_j, \;  \forall j }   
\]
and the proof is complete.
\end{proof}

\begin{remark} \rm  
 While Corollary~\ref{corollary: Reduction of Semi-Infinite Constraint} shows that the infinite-dimensional DRO problem~\eqref{problem: DRO ELG with transaction costs during rebalancing} can be approximated by a finite convex program, the number of extremum points grows exponentially with the number of assets $m$, which may pose computational challenges. However, standard approaches such as the \emph{cutting plane method}, see \cite[Chapter 14.7]{luenberger2021linear}, can be used to efficiently solve the resulting convex program.
\end{remark}

\begin{lemma}[Decomposition of Distributions] \label{lemma: Decomposition of Marginal Distribution} \it
Let $\mathbb{F}$ and $ \widehat{\mathbb{F}}$ be the marginal distribution of $\mathcal{X}$ and ${ \mathcal{X}}'$, respectively, as defined in Section~\ref{section: Problem Formulation}. 
Then, the following two expression holds:
\begin{itemize}
  \item[$(i)$] We have $
    \mathbb{F} = \frac{1}{N} \sum_{j=1}^N \mathbb{F}_j
	$
  where $\mathbb{F}_j$ is the conditional distribution of ${\mathcal{X}}$ given $\mathcal{X}^{\prime} = \widehat{\mathcal{X}}_j$ for each $j =1,2,\dots, N$. 
	
  \item[$(ii)$] The joint distribution $\Pi$ of $\mathcal{X}$ and $\mathcal{X}'$ is given by $\Pi = \frac{1}{N}\sum_{j=1}^{N}\delta_{\widehat{\mathcal{X}}_j} \otimes \mathbb{F}_j$ where $\delta_{\widehat{\mathcal{X}}_j} \otimes \mathbb{F}_j$ is product measure.
\end{itemize}	
\end{lemma}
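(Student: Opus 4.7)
The plan is to exploit the fact that the second marginal $\widehat{\mathbb{F}}=\frac{1}{N}\sum_{j=1}^{N}\delta_{\widehat{\mathcal{X}}_j}$ is a purely atomic empirical measure with $N$ atoms, each carrying mass $1/N$. This trivializes any disintegration question because each conditional distribution $\mathbb{F}_j := \mathrm{Law}(\mathcal{X}\mid \mathcal{X}'=\widehat{\mathcal{X}}_j)$ is well defined directly from the elementary definition of conditional probability, so no regular conditional distribution theorem on Polish spaces is needed.

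For part $(i)$, I would apply the law of total probability directly. Taking any Borel set $A\subseteq\mathfrak{X}$,
\begin{equation*}
\mathbb{F}(A) = \Pi(A\times\mathfrak{X}) = \sum_{j=1}^{N}\Pi\!\left(A\times\{\widehat{\mathcal{X}}_j\}\right) = \sum_{j=1}^{N}\mathbb{P}(\mathcal{X}\in A\mid \mathcal{X}'=\widehat{\mathcal{X}}_j)\,\mathbb{P}(\mathcal{X}'=\widehat{\mathcal{X}}_j) = \frac{1}{N}\sum_{j=1}^{N}\mathbb{F}_j(A),
\end{equation*}
which is the claim.

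For part $(ii)$, the strategy is to verify the equality of $\Pi$ and $\frac{1}{N}\sum_{j=1}^{N}\delta_{\widehat{\mathcal{X}}_j}\otimes\mathbb{F}_j$ on measurable rectangles $A\times B$ and then extend to the whole Borel $\sigma$-algebra by Dynkin's $\pi$-$\lambda$ theorem (equivalently, uniqueness of the Carathéodory extension, since rectangles generate a $\pi$-system). Conditioning on the atomic marginal gives
\begin{equation*}
\Pi(A\times B) = \sum_{j:\,\widehat{\mathcal{X}}_j\in B}\mathbb{P}(\mathcal{X}\in A\mid \mathcal{X}'=\widehat{\mathcal{X}}_j)\cdot \frac{1}{N} = \frac{1}{N}\sum_{j=1}^{N}\mathbb{F}_j(A)\,\delta_{\widehat{\mathcal{X}}_j}(B),
\end{equation*}
which is exactly $\bigl(\frac{1}{N}\sum_{j=1}^{N}\delta_{\widehat{\mathcal{X}}_j}\otimes\mathbb{F}_j\bigr)(A\times B)$.

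I do not anticipate any serious obstacle: once one notes that $\widehat{\mathbb{F}}$ is a finite convex combination of Dirac masses, the lemma is just the combinatorial version of disintegration and reduces to elementary conditional probability. The only bookkeeping step is to confirm that the proposed measure has total mass one and yields the correct marginals $\mathbb{F}$ and $\widehat{\mathbb{F}}$, both of which follow immediately from part $(i)$ together with the identity $\mathbb{F}_j(\mathfrak{X})=1$ for every $j$.
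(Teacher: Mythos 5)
Your proposal is correct and follows essentially the same route as the paper's own proof: part $(i)$ via the law of total probability over the $N$ equiprobable atoms of $\widehat{\mathbb{F}}$, and part $(ii)$ by verifying the identity on measurable rectangles $A\times B$ and extending to the product $\sigma$-algebra by the $\pi$-$\lambda$ theorem. No gaps to report.
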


\begin{proof}
See Appendix~\ref{appendix: proofs in section main results}.
\end{proof}

\begin{lemma} \label{lemma: equity before auxilary variables with transaction costs during rebalancing} \it
For $\mathcal{M}( { \mathfrak{X} })$ contains all probability distribution supported on $ { \mathfrak{X} }$, the following equality holds:
\begin{align*}
	&\inf_{\mathbb{F}_j \in \mathcal{M}( { \mathfrak{X} })}  \int \left[\log \left(c(w) +w^\top  {\mathcal{X}}\right) + \lambda \| {\mathcal{X}}-\widehat{\mathcal{X}}_j\| \right] d{\mathbb{F}_j}
	\\
	&\qquad =
	\inf_{ {\mathcal{X}} \in  { \mathfrak{X} }} \left[ \log \left(c(w) + w^\top  {\mathcal{X}}\right) + \lambda {\| {\mathcal{X}} + \widehat{\mathcal{X}}_j\|} \right].
\end{align*}
\end{lemma}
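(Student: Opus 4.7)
The plan is to establish the claimed equality by sandwiching the two sides with matching inequalities. Let me abbreviate the integrand as $g_j(\mathcal{X}) := \log(c(w) + w^\top \mathcal{X}) + \lambda \|\mathcal{X} - \widehat{\mathcal{X}}_j\|$, which is continuous in $\mathcal{X}$ on the compact support $\mathfrak{X}$ (recall $c(w) + w^\top\mathcal{X} > 0$ by the robust survivability condition in Lemma~\ref{lemma: robust survival condition with transaction costs rebalancing}), so $m_j := \inf_{\mathcal{X} \in \mathfrak{X}} g_j(\mathcal{X})$ is finite and is actually attained at some $\mathcal{X}^\star \in \mathfrak{X}$ by the Weierstrass Extreme Value Theorem.

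For the direction $\inf_{\mathbb{F}_j} \int g_j \, d\mathbb{F}_j \geq m_j$, I would observe that $g_j(\mathcal{X}) \geq m_j$ pointwise on $\mathfrak{X}$, and integrating this inequality against any $\mathbb{F}_j \in \mathcal{M}(\mathfrak{X})$ (a probability measure, so it has total mass $1$) yields $\int g_j \, d\mathbb{F}_j \geq m_j$. Taking the infimum over $\mathbb{F}_j \in \mathcal{M}(\mathfrak{X})$ preserves the bound.

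For the reverse direction, I would exploit the fact that $\mathcal{M}(\mathfrak{X})$ contains every Dirac distribution $\delta_{\mathcal{X}^\star}$ with $\mathcal{X}^\star \in \mathfrak{X}$. Plugging $\mathbb{F}_j = \delta_{\mathcal{X}^\star}$ into the integral gives $\int g_j \, d\delta_{\mathcal{X}^\star} = g_j(\mathcal{X}^\star) = m_j$, so the infimum over $\mathcal{M}(\mathfrak{X})$ is bounded above by $m_j$. Combining the two inequalities yields equality. (If one wished to avoid invoking attainment, one could instead take a near-minimizing sequence $\mathcal{X}^{(k)}$ with $g_j(\mathcal{X}^{(k)}) \to m_j$ and use the Dirac measures $\delta_{\mathcal{X}^{(k)}}$ to approach $m_j$ from above.)

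There is essentially no obstacle here; the argument is the standard fact that an infimum of a linear functional in the measure variable over a set of probability measures containing all Diracs reduces to the pointwise infimum of the integrand. The only subtleties worth flagging in the write-up are (i) noting explicitly that the integrand is well-defined and finite on $\mathfrak{X}$ thanks to robust survivability, so no $-\infty$ values appear, and (ii) remarking that the ``$+$'' appearing in the statement of the lemma should read ``$-$,'' matching the definition of $g_j$ and consistent with its use in equation~\eqref{eq:before auxilary variables with transaction costs during rebalancing}.
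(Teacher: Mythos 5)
Your proposal is correct and follows essentially the same route as the paper's proof: the upper bound comes from restricting to Dirac measures in $\mathcal{M}(\mathfrak{X})$, and the lower bound from integrating the pointwise inequality $g_j(\mathcal{X}) \geq \inf_{\mathcal{X}' \in \mathfrak{X}} g_j(\mathcal{X}')$ against a probability measure of total mass one. Your remark that the ``$+$'' in $\|\mathcal{X} + \widehat{\mathcal{X}}_j\|$ should read ``$-$'' is also correct; it is a typo in the statement, as confirmed by how the lemma is invoked in the derivation of the dual problem.
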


\begin{proof}
See Appendix~\ref{appendix: proofs in section main results}.
\end{proof}

\begin{lemma} \label{lemma:turn into inf_sup with transaction costs during rebalancing}  
\it The dual problem~\eqref{problem:DRO-ELG dual with transaction costs during rebalancing} has a finite optimal value, and the set of dual optimal solutions is nonempty and bounded.
\end{lemma}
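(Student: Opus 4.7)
The plan is to combine the strong duality established in the proof of Theorem~\ref{theorem:convex reduction with non-zero transaction costs during rebalancing} with standard coercivity and compactness arguments to obtain finiteness, nonemptiness, and boundedness of the dual optimizer set in a single pass.

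First, I would dispatch finiteness of the optimal value. Slater's condition for the primal holds because $\widehat{\mathbb{F}}$ itself satisfies $d(\widehat{\mathbb{F}},\widehat{\mathbb{F}})=0<\varepsilon$, so strong duality applies and the primal and dual share a common optimal value. The primal integrand $\log(c(w)+w^\top \mathcal{X})$ is continuous on the compact rectangle $\mathfrak{X}$ and has strictly positive argument on $\overline{\mathcal{W}}_{\rm s}\times\mathfrak{X}$ by Lemma~\ref{lemma: robust survival condition with transaction costs rebalancing}; thus it is uniformly bounded in absolute value, and integration against any $\mathbb{F}\in\mathcal{B}_\varepsilon(\widehat{\mathbb{F}})$ preserves boundedness. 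The primal value is therefore finite, and by strong duality so is the dual value.

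Second, to handle nonemptiness, I would first replace the semi-infinite constraint by its equivalent finite form from Corollary~\ref{corollary: Reduction of Semi-Infinite Constraint}, casting the dual as a standard finite-dimensional convex program. An explicit feasible point (e.g. $\lambda=0$, $z_j=0$, any $w\in\overline{\mathcal{W}}_{\rm s}$, and $s_j:=\min_{\mathcal{X}^v\in\mathrm{Ext}(\mathfrak{X})}\log(c(w)+w^\top \mathcal{X}^v)$) certifies the feasible set is nonempty. For attainment, I would show that for $\alpha$ just below the optimal value the superlevel set of the objective intersected with the feasible set is closed and bounded, then apply Weierstrass. Closedness follows because each constraint function is continuous (the minimum over the finitely many extreme points is continuous in $(w,z_j)$). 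Boundedness is the substantive step, carried out next.

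Third, I would bound each decision variable \emph{a priori}. The set $\overline{\mathcal{W}}_{\rm s}$ is contained in the unit simplex and hence bounded. Evaluating the epigraphical constraint at $\mathcal{X}=\widehat{\mathcal{X}}_j$ yields $s_j\le\log(c(w)+w^\top \widehat{\mathcal{X}}_j)$, which is bounded above by some constant $M$ uniformly over $w\in\overline{\mathcal{W}}_{\rm s}$. Since the dual objective is $\tfrac{1}{n}\bigl(-\lambda\varepsilon+\tfrac{1}{N}\sum_j s_j\bigr)$ with $\varepsilon>0$, any $\lambda>(M-\alpha)/\varepsilon$ drives the objective strictly below $\alpha$, producing a uniform upper bound $\overline{\lambda}$ on $\lambda$ across the superlevel set. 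Then $\|z_j\|_*\le\lambda\le\overline{\lambda}$ bounds each $z_j$, while $s_j$ is bounded both above (by $M$) and below (otherwise the objective would fall below $\alpha$), so the entire superlevel set is bounded. Combining closedness and boundedness gives compactness, and Weierstrass delivers a nonempty optimizer set; the same bounds show the optimizer set itself is bounded.

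The main technical obstacle I anticipate is justifying attainment without circularity: the natural coercivity argument bounds $\lambda$ only after fixing a feasible objective value $\alpha$ below the optimum, but the existence of such an $\alpha$ is ensured by the finiteness step above, so the logic closes cleanly. The reduction from semi-infinite to finitely many extreme-point constraints via Corollary~\ref{corollary: Reduction of Semi-Infinite Constraint} is what makes closedness of the feasible set transparent and allows the rest of the argument to proceed by standard finite-dimensional convex analysis.
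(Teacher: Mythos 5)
Your proof is correct and reaches the same conclusions as the paper, but by a genuinely different and in places more careful route. The paper establishes feasibility exactly as you do (set $\lambda = z_j = 0$ and let $s_j$ be the attained minimum of the log term), but then argues finiteness by asserting that the $s_j$ are bounded ``due to the boundedness of $\mathfrak{X}$ and $z_j$'' and proves attainment by claiming that a continuous objective over a nonempty closed convex feasible set achieves its maximum; both steps are incomplete as written, since $z_j$ is only bounded once $\lambda$ is, and closedness alone does not yield attainment. Your route repairs both: you get the upper bound on the optimal value from the primal side (the reformulated program under-estimates the primal by the minimax inequality in the proof of Theorem~\ref{theorem:convex reduction with non-zero transaction costs during rebalancing}, and the primal integrand is bounded above on the compact $\mathfrak{X}$), and you get attainment from compactness of a superlevel set plus Weierstrass. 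The key technical improvement is evaluating the epigraphical constraint at $\mathcal{X} = \widehat{\mathcal{X}}_j$, which annihilates the term $z_j^\top(\mathcal{X} - \widehat{\mathcal{X}}_j)$ and bounds $s_j$ above uniformly \emph{before} anything is known about $\lambda$ or $z_j$; this breaks the circular ordering ($s_j$ bounded $\Rightarrow$ $\lambda$ bounded via $-\lambda\varepsilon$ $\Rightarrow$ $z_j$ bounded via $\|z_j\|_* \le \lambda$ $\Rightarrow$ $s_j$ bounded below) that the paper leaves implicit. Two small imprecisions to fix: problem~\eqref{problem:DRO-ELG dual with transaction costs during rebalancing} is only a lower-bounding approximation of the primal, not its exact dual, so invoke only the one-sided inequality ``approximate dual value $\le$ primal value'' for the upper bound (which is all your argument actually uses); and on the boundary of $\overline{\mathcal{W}}_{\rm s}$ the argument of the logarithm can vanish at a corner of $\mathfrak{X}$, so the integrand is uniformly bounded \emph{above} but not in absolute value --- a shared imprecision with the paper that does not affect your conclusions, provided the explicit feasible point is taken with $w \in \mathcal{W}_{\rm s}$.
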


\begin{proof}
See Appendix~\ref{appendix: proofs in section main results}.
\end{proof}

\section{Empirical Studies} \label{section: Empirical Studies}  

In this section, we conduct empirical studies to analyze the effect of varying the Wasserstein ball's radius size~$\varepsilon$ and examine the impact of imposing different levels of transaction costs during rebalancing. Henceforth, we assume that the convex cost model follows a proportional cost structure; i.e., $TC_i(u_i(0)) := c_i w_i V(0)$, where~$c_i = c \in [0, 1)$ for all $i=1,\dots,m$. This cost model is indeed typical in real-world trading.\footnote{For example, trading on the Taiwan Stock Exchange typically incurs a transaction fee of $\alpha \cdot 0.1425\%$ of the trade value on buying, for some $\alpha \in (0, 1]$, and an additional transaction fee of $0.3\%$ of the trade value on selling. As another example, using professional broker services such as Interactive Brokers Pro may incur a cost of $\$0.005$ per share, with a minimum fee of $\$1$ and a maximum fee of $1\%$ of the trade value.} 

\medskip
\emph{Data.}
We consider a portfolio comprising a risk-free asset\footnote{For the risk-free rate, we use the Market Yield on U.S. Treasury Securities at One-Month Constant Maturity, Quoted on an Investment Basis. The daily data represents the yield on short-term U.S. Treasury bills, as provided by the Federal Reserve Board. The yearly average risk-free rate in 2022 and~2023 were about $1.67\%$ and $5.14\%$, respectively.} and risky assets from stocks ranked in the top~10 by market capitalization in the S\&P 500 at~2022,\footnote{See~\cite{spglobal2023}.} as shown in Table~\ref{table:S&P500's top 10 stocks}. 
The corresponding stock prices, obtained from Yahoo Finance for the two-year period from January~1, 2022, to January~1, 2024,  are shown in Figure~\ref{fig:stock_prices}. 
Note that 2022 was a bearish market, and 2023 was a bullish market, indicating a market regime switching. However, as demonstrated in later subsections, our proposed cost-sensitive DRO approach provides a robust solution to handle such market regime switches.

\begin{table}[h!]
\caption{S\&P 500's Top 10 Stocks in 2022 Year-End}
\label{table:S&P500's top 10 stocks}
\resizebox{\columnwidth}{!}{%
	\begin{tabular}{ l l l  c}
		\toprule
		Rank & Company & Ticker & Percentage of  Total\\ & & &  Index Market Value (\%)\\
		\midrule
		1 & Apple Inc. & \texttt{AAPL} & 6.2\\
		
		2 & Microsoft Corporation & \texttt{MSFT} & 5.3 \\
		
		3 & Amazon.com Inc. & \texttt{AMZN} & 2.6 \\
		
		4 & Alphabet Inc. Class C & \texttt{GOOG} & 1.6 \\
		
		5 & Alphabet Inc. Class A & \texttt{GOOGL} & 1.6 \\
		
		6 & United Health Group Inc. & \texttt{UNH} & 1.5 \\
		
		7 & Johnson \& Johnson & \texttt{JNJ} & 1.4 \\
		
		8 & Exxon Mobil Corporation & \texttt{XOM} & 1.4 \\
		
		9 & Berkshire Hathaway Inc. Class B & \texttt{BRK.B} & 1.2 \\
		
		10 & JPMorgan Chase \& Co. & \texttt{JPM} & 1.3 \\
		\bottomrule
	\end{tabular}
}
\end{table}

\begin{figure}[h!]
\centering
\includegraphics[width=1\linewidth]{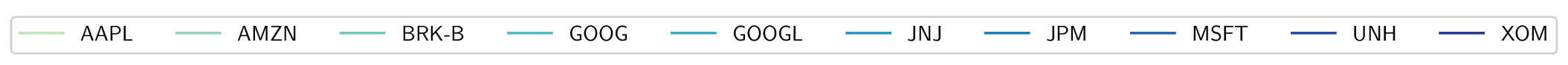}
\includegraphics[width=1\linewidth]{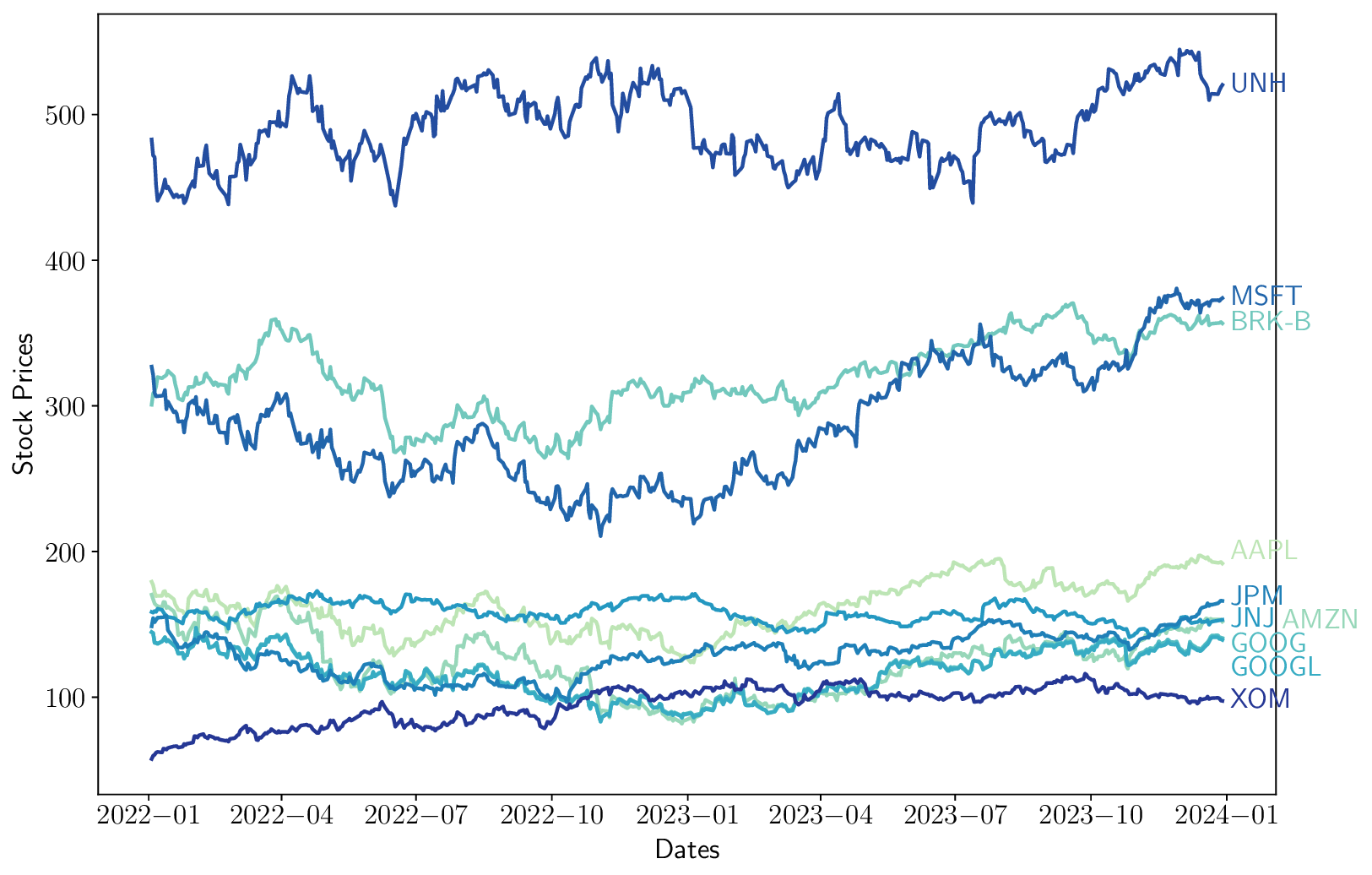}
\caption{Historical Adjusted Closing Prices for the 10 Selected Stocks of S\&P 500.}
\label{fig:stock_prices}
\end{figure}


\emph{Simulation Details.}
Throughout this section, we use a \emph{sliding window approach} to repeatedly solve the proposed cost-sensitive DRO log-optimal problem during monthly rebalancing, similar to the approach in \cite{blanchet2022distributionally, wang2022data, hsieh2023solving}. This approach yields time-varying log-optimal weights that better reflect the dynamic market behavior. Starting with an initial account value of $V(0)=\$1$, we rebalance the portfolio using a one-month window size to obtain the latest optimal weight $w$.

Specifically, in the first month, we collected historical daily price data without making trades. This data was then used to generate initial random samples of asset returns. One common approach is to model the price dynamics using Geometric Brownian Motion (GBM) within a Monte Carlo simulation; see~\cite{gilli2019numerical}. By conducting 1,000 simulations, we generated a set of \emph{Monte Carlo-based} random samples of the finite random vector ${\mathcal{X}}_n \sim \mathbb{F}$ for the first month. These samples were subsequently used to calculate the optimal weight applied throughout the second month. This process was repeated each month by sliding the window of historical data and resolving the problem until the end of 2023. Notably, transaction costs were applied only to the risky assets during rebalancing periods, while \emph{no} costs were incurred for the risk-free asset.

\subsection{Effect of Ambiguity Sizes}
This section examines the effect of the ambiguity size of the Wasserstein ball, in terms of $\varepsilon$, under various transaction cost levels.  
We solve the cost-sensitive DRO log-optimal problem using a sliding window approach for different values of the radius~$\varepsilon \in [0, 2]$ with a window size of one month. This yields twelve sets of optimal weights~$w$ per year.  Figures~\ref{fig:Varying Radius Size during rebalancing 2022} and~\ref{fig:Varying Radius Size during rebalancing 2023} depict the resulting optimal weights for the first month of 2022 and 2023, respectively. 
Similar patterns are observed in the weights obtained for the remaining months. 
For example, see Figures~\ref{fig:twelve figure with 0.1 transaction costs during rebalancing 2022} and~\ref{fig:twelve figure with 0.1 transaction costs during rebalancing 2023} for the set of optimal weight across all twelve months under~0.1\% costs in 2022 and~2023, respectively.

In the absence of transaction costs, increasing the radius size leads to an equal-weighted portfolio, i.e., $w_i = 1/m$ for all $i$, indicating greater portfolio diversification. 
When transaction costs are considered, as shown in Figures~\ref{fig:twelve figure with 0.1 transaction costs during rebalancing 2022} and \ref{fig:twelve figure with 0.1 transaction costs during rebalancing 2023}, the portfolio weights initially tend to an equal-weighted portfolio, which is consistent with \cite{mohajerin2018data, li2023wasserstein}. However, as the radius of the Wasserstein ball increases further, the portfolio shifts slightly towards the risk-free asset.
Comprehensive optimal weights studies can be found in the supplementary document.

\begin{figure}[h!]
\centering
\includegraphics[width=.9\linewidth]{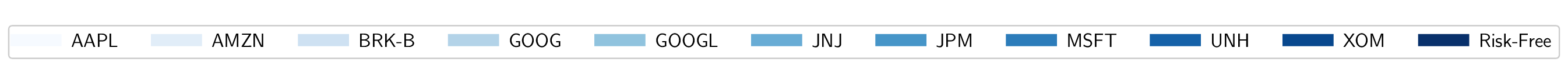}
\includegraphics[width=.9\linewidth]{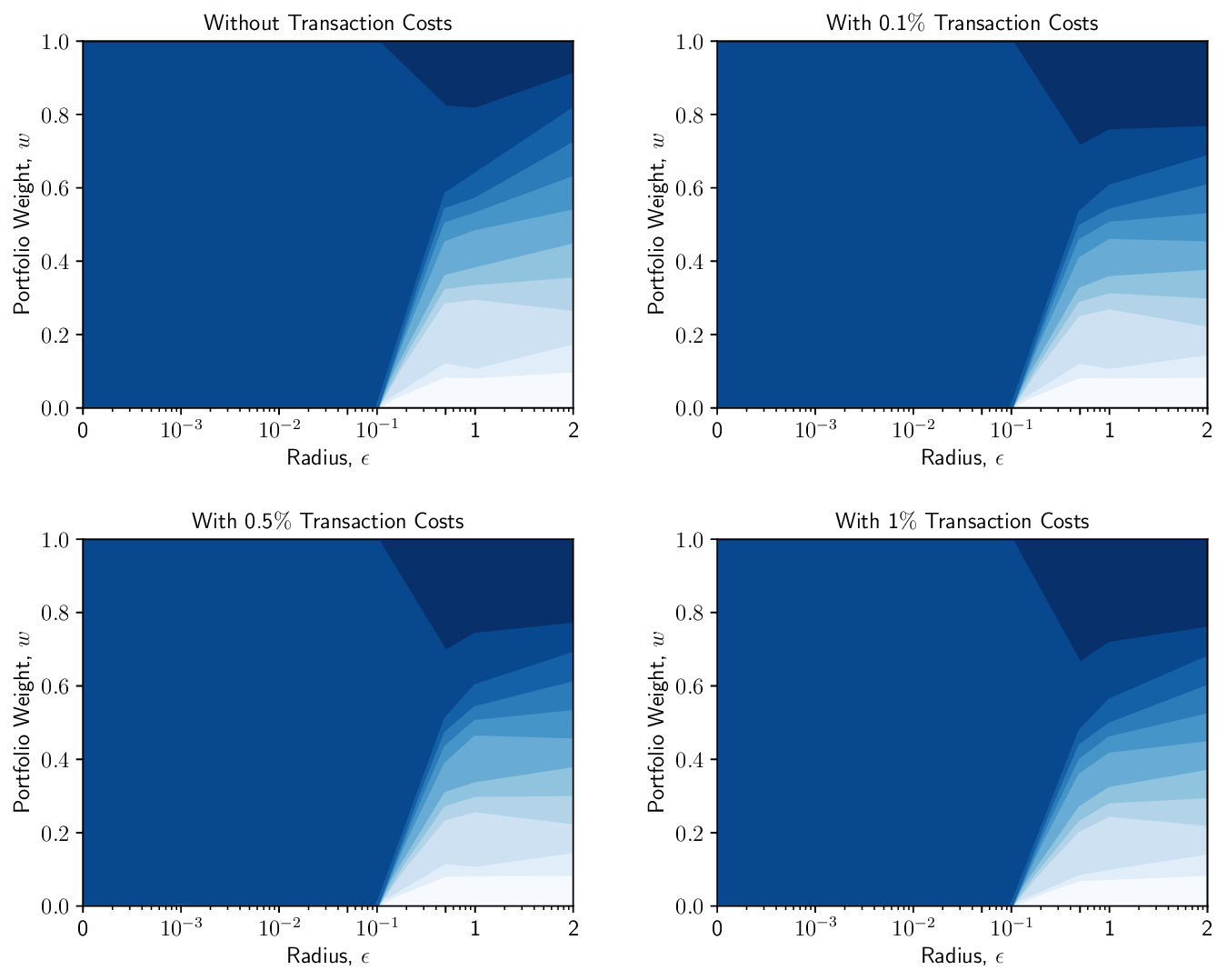}
\caption{Effect of Varying Ambiguity Size on Optimal Weights Using Data from January of~2022.}   
\label{fig:Varying Radius Size during rebalancing 2022}
\end{figure}

\begin{figure}[h!]
\centering
\includegraphics[width=.9\linewidth]{figs_two_year/legend_only.eps}
\includegraphics[width=.9\linewidth]{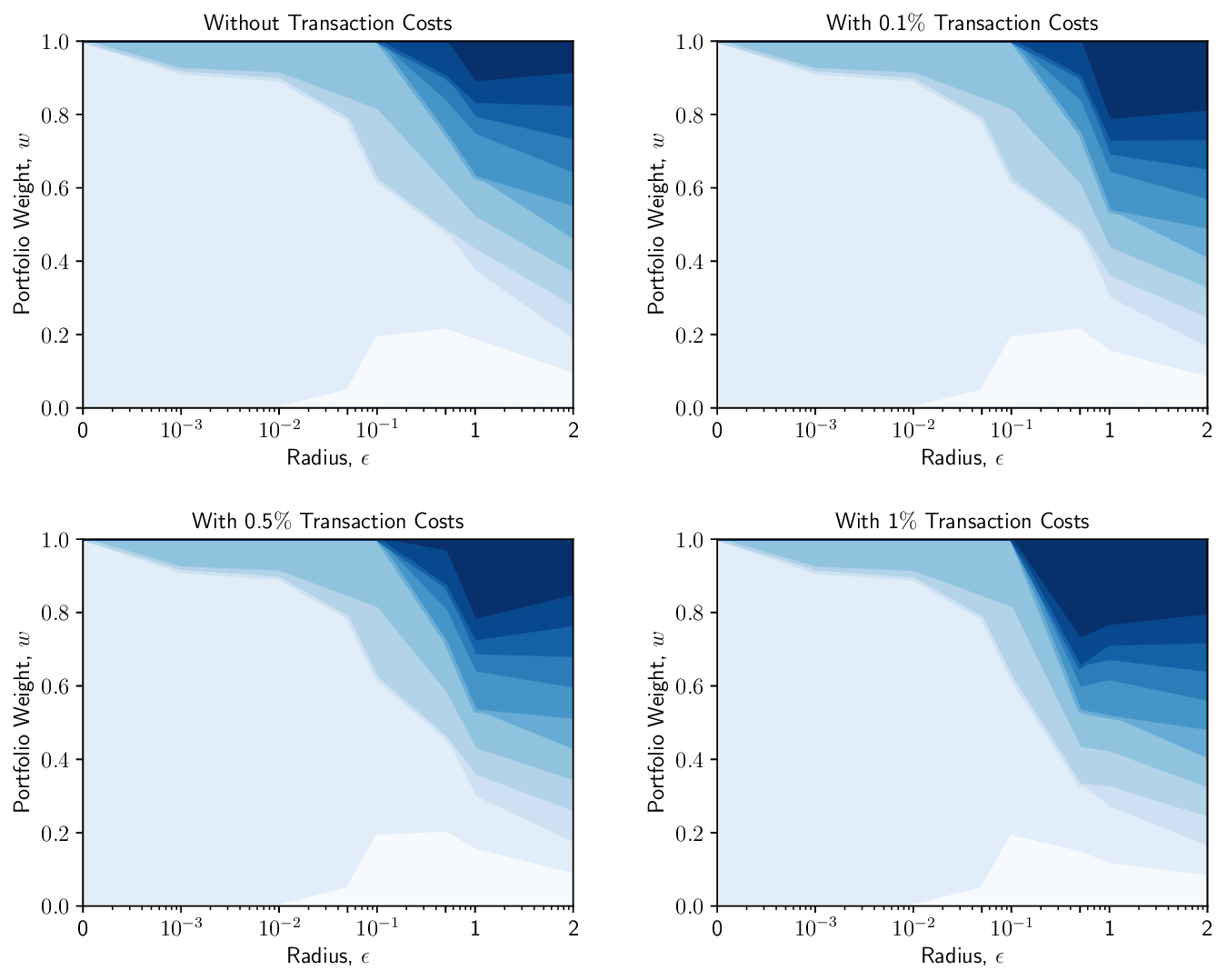}
\caption{Effect of  Varying Ambiguity Size on Optimal Weights Using Data from January of~2023.}   
\label{fig:Varying Radius Size during rebalancing 2023}
\end{figure}

\subsection{Out-Of-Sample Trading Performance}
 Figure~\ref{fig:Account Value Trajectories with Different Transaction Costs} shows the out-of-sample trading performance, in terms of the account value trajectories, for fixed transaction costs and varying Wasserstein radius sizes~$\varepsilon \in \{0, 10^{-3}, 10^{-2}, 10^{-1}, 1\}$. For comparison, a benchmark path tracking the S\&P 500 ETF (Ticker:~\texttt{SPY}) is also included, with no cost applied to this path. 
 
 With the monthly rebalancing approach, account values only change at the beginning of each month, as the investor makes no trades between rebalancing periods. The figure shows that, during the 2022 period, the portfolio with $\varepsilon = 0$, which fully trusts the historical data, performs better, regardless of the transaction cost level. However, by the end of the 2023 year, the portfolio with~$\varepsilon = 0$ exhibits the worst overall performance. In contrast, as $\varepsilon$ increases, the extent of the decline in portfolio value diminishes and, in several cases, outperforms the S\&P 500 benchmark by late 2023. This pattern suggests that a larger radius leads to more diversified portfolio allocations, helping to mitigate potential downside~risks.

\begin{figure}[h!]
\centering
\includegraphics[width=1\linewidth]{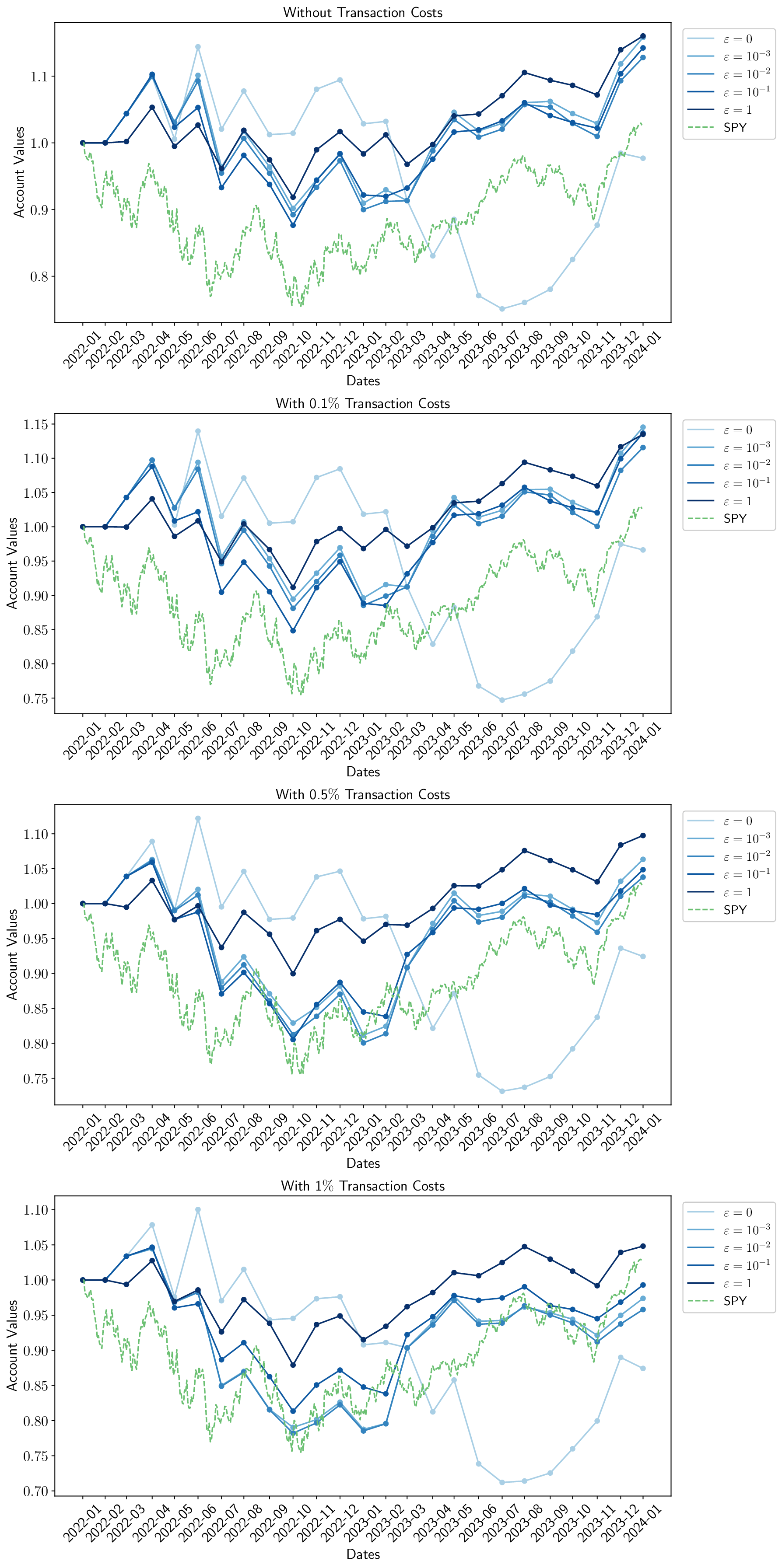}
\caption{Out-of-Sample Monthly Rebalanced Account Value Trajectories: Different Radius Sizes and Transaction Costs. Note that $\varepsilon = 0$ Corresponds to Classical ELG Strategy, and $\varepsilon > 0$ Corresponds to Robust ELG Strategy with $\varepsilon = 1$ Approximating to an Equal Weights Portfolio. }   
\label{fig:Account Value Trajectories with Different Transaction Costs}
\end{figure}

Tables~\ref{table:Without Transaction Costs during rebalancing}--\ref{table:With 1 Transaction Costs during rebalancing} summarize the trading performance, including cumulative return (\texttt{CR}), annualized standard deviation (\texttt{STD}), annualized Sharpe ratio (\texttt{SR}), and maximum drawdown (\texttt{MDD}).\footnote{With the $\{V(k)\}_{k=0}^{N}$ be the realized account value sequence from stage $k=0$ to $k=N$, the cumulative return \texttt{CR} is defined as $\frac{V(N) - V(0)}{ V(0)}$. The per-period return is defined as $R(k):= \frac{V(k+1) - V(k)}{ V(k)}$ and $\overline{R}:= \frac{1}{N}\sum_{k=0}^{N} R(k)$ is the sample mean of $R(k)$. The (sample) standard deviation \texttt{STD} is defined as $\sqrt{N} \cdot \texttt{STD}_{\rm daily}$ with $\texttt{STD}_{\rm daily} := \sqrt{ \frac{1}{N-1} \sum_{k=0}^{N-1} (R(k) - \overline{R})^2} $. The Sharpe ratio \texttt{SR} is defined as \texttt{SR}:= $\sqrt{N} \frac{\overline{R} - r_f}{\texttt{STD}}$. The maximum percentage drawdown \texttt{MDD}:= $\max_{1 \leq k \leq N} \frac{V_{\max}(k) - V(k)}{V_{\max}(k)}$, where $V_{\max}(k) := \max_{0 \leq l \leq k} V(l)$.} 
As observed from the tables, when~$\varepsilon = 1$, the portfolio demonstrates lower standard deviation (\texttt{STD}) and maximum drawdown (\texttt{MDD}), along with higher cumulative return (\texttt{CR}) and annualized Sharpe ratio (\texttt{SR}) compared to when $\varepsilon=0$. This indicates that larger radius size $\varepsilon$ may enhance the robustness of out-of-sample performance, leading to less volatile outcomes compared to scenarios without ambiguity consideration. Moreover, these performance improvements are not limited to $\varepsilon = 1$; in fact, whenever~$\varepsilon \neq 0$, i.e., when ambiguity is considered, the portfolio consistently outperforms both \texttt{SPY} and the $\varepsilon =0$~case.


\begin{table}[h!]
	\centering
	\caption{Trading Performance Without Transaction Costs}
	\label{table:Without Transaction Costs during rebalancing}
	\begin{tabular}{l cccc}
		\toprule
		$\varepsilon$ & \texttt{CR} & \texttt{STD} & \texttt{SR} & \texttt{MDD} \\
		\midrule
		$0$       &0.977 & 0.260 & -0.052 & 34.35$\%$ \\
		$10^{-3}$ &1.158 & 0.195 & 0.315 & 18.11$\%$ \\
		$10^{-2}$ &1.128 & 0.191 & 0.246 & 18.86$\%$ \\
		$10^{-1}$ &1.142 & 0.176 & 0.289 & 20.52$\%$ \\
		$1$       &1.160 & 0.144 & 0.374 & 12.79$\%$\\
        \midrule
        \texttt{SPY} & 1.026 & 0.195 & -0.006 & 24.50$\%$ \\
		\bottomrule
	\end{tabular}
\end{table}

\begin{table}[h!]
	\centering
	\caption{Trading Performance With $0.1\%$ Transaction Costs}
	\label{table:With 0.1 Transaction Costs during rebalancing}
	\begin{tabular}{l cccc}
		\toprule
		$\varepsilon$ & \texttt{CR} & \texttt{STD} & \texttt{SR} & \texttt{MDD} \\
		\midrule
		$0$       &0.966 & 0.258 & -0.077 & 34.44$\%$ \\
		$10^{-3}$ &1.145 & 0.192 & 0.287 & 18.49$\%$ \\
		$10^{-2}$ &1.116 & 0.190 & 0.216 & 19.71$\%$ \\
		$10^{-1}$ &1.137 & 0.175 & 0.275 & 22.02$\%$ \\
		$1$       &1.135 & 0.129 & 0.312 & 12.41$\%$\\

		\bottomrule
	\end{tabular}
\end{table}

\begin{table}[h!]
	\small
	\centering
	\caption{Trading Performance With $0.5\%$ Transaction Costs}
	\label{table:With 0.5 Transaction Costs during rebalancing}
	\begin{tabular}{l cccc}
		\toprule
		$\varepsilon$ & \texttt{CR} & \texttt{STD} & \texttt{SR} & \texttt{MDD} \\
		\midrule
		$0$       &0.924 & 0.251 & -0.178 & 34.82$\%$ \\
		$10^{-3}$ &1.063 & 0.187 & 0.082 & 23.65$\%$ \\
		$10^{-2}$ &1.038 & 0.190 & 0.017 & 24.61$\%$ \\
		$10^{-1}$ &1.049 & 0.171 & 0.030 & 23.96$\%$ \\
		$1$       &1.097 & 0.124 & 0.178 & 12.91$\%$\\
		\bottomrule
	\end{tabular}
\end{table}

\begin{table}[h!]
	\small
	\centering
	\caption{Trading Performance With $1\%$ Transaction Costs}
	\label{table:With 1 Transaction Costs during rebalancing}
	\begin{tabular}{l cccc}
		\toprule
		$\varepsilon$ & \texttt{CR} & \texttt{STD} & \texttt{SR} & \texttt{MDD} \\
		\midrule
		$0$       &0.874 & 0.242 & -0.312 & 35.29$\%$ \\
		$10^{-3}$ &0.974 & 0.180 & -0.178 & 24.59$\%$ \\
		$10^{-2}$ &0.958 & 0.181 & -0.222 & 25.19$\%$ \\
		$10^{-1}$ &0.993 & 0.147 & -0.185 & 22.28$\%$ \\
		$1$       &1.048 & 0.124 & -0.015 & 14.46$\%$\\
		\bottomrule
	\end{tabular}
\end{table}

\subsection{Effects of Transaction Costs During Rebalancing}
This section examines the effects of different transaction cost levels during rebalancing while maintaining the radius fixed. Specifically, we impose proportional transaction costs of $c \in \{0\%$, $0.1\%$, $0.5\%$, $1\%\}$ to portfolios with fixed radius sizes $\varepsilon \in \{0, 10^{-3}, 10^{-2}, 10^{-1}, 1, 2 \}$. Figure~\ref{fig:Account Value Trajectories with Different Radius Sizes} illustrates that the cumulative return~(\texttt{CR}) decreases as transaction costs increase, leading to a deterioration in trading performance.

\begin{figure}[h!]
\centering
\includegraphics[width=1\linewidth]{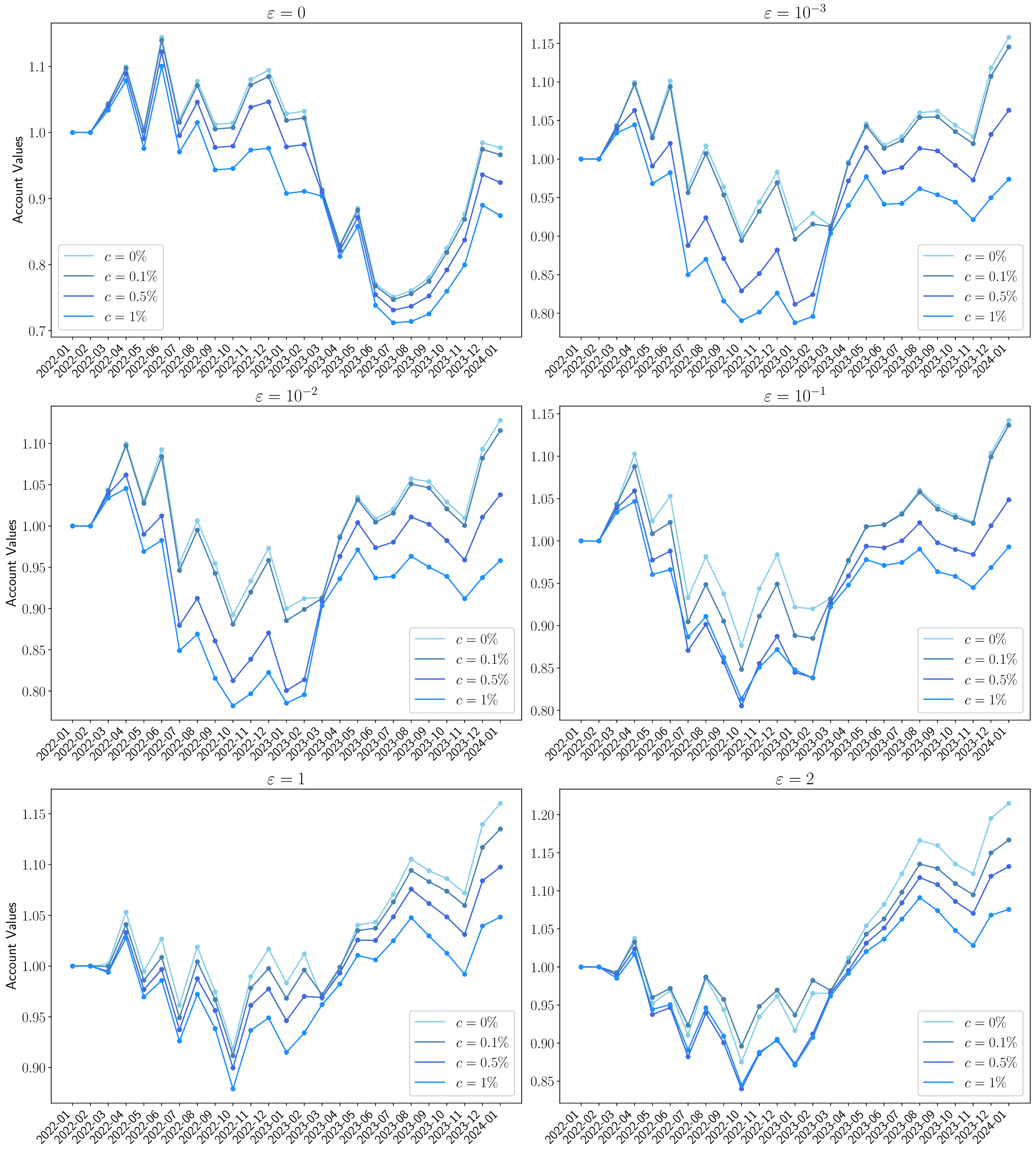}
\caption{Transaction Cost Effect: Out-of-Sample Monthly Rebalanced Account Value Trajectories.}   
\label{fig:Account Value Trajectories with Different Radius Sizes}
\end{figure}


\begin{figure}[h!]
\centering
\includegraphics[width=1\linewidth]{figs_two_year/legend_only.eps}
\includegraphics[width=\linewidth]{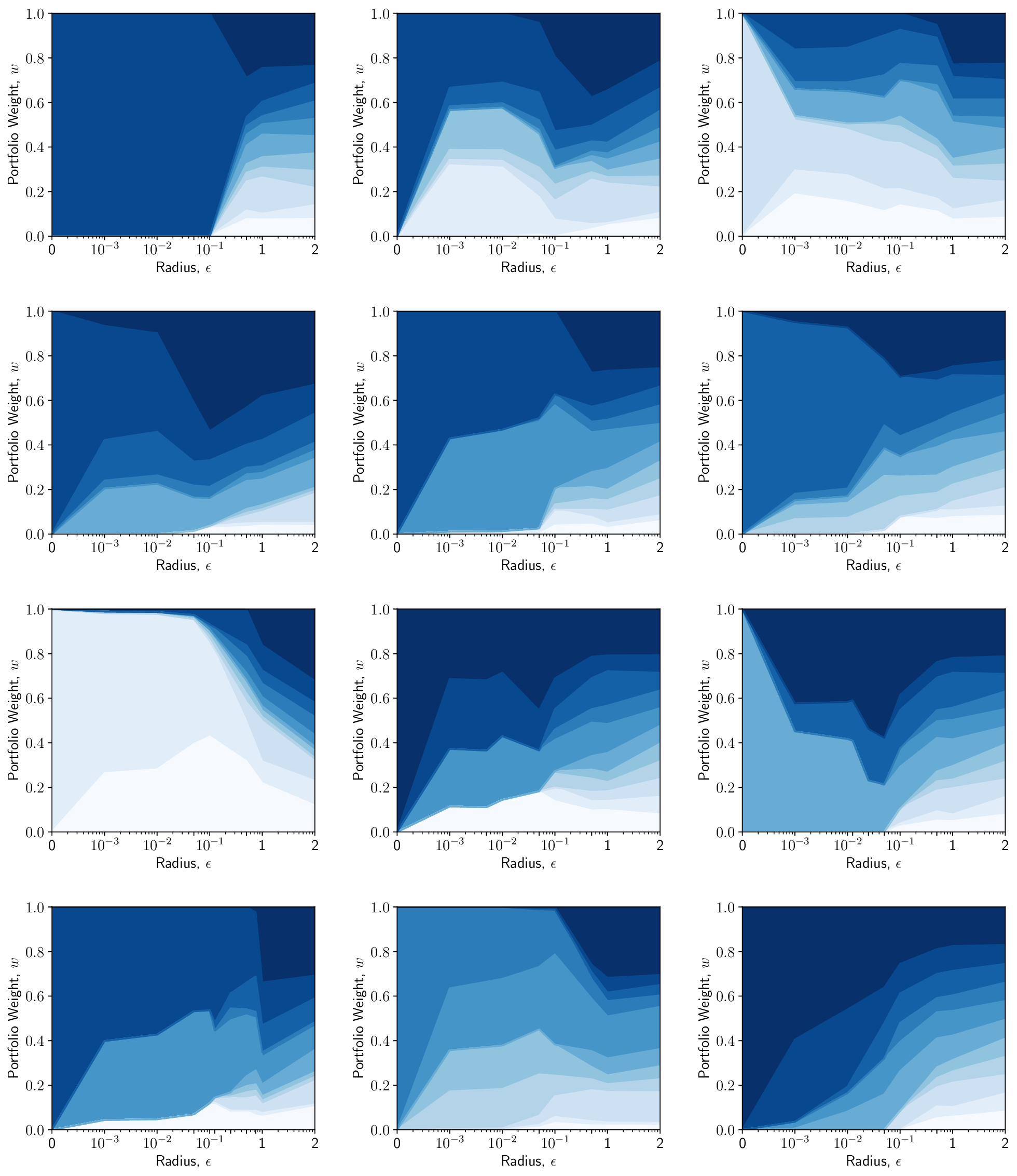}
\caption{Monthly Optimal Weight in 2022:  With $0.1\%$ Transaction Costs.}
\label{fig:twelve figure with 0.1 transaction costs during rebalancing 2022}
\end{figure}



\begin{figure}[h!]
\centering
\includegraphics[width=1\linewidth]{figs_two_year/legend_only.eps}
\includegraphics[width=\linewidth]{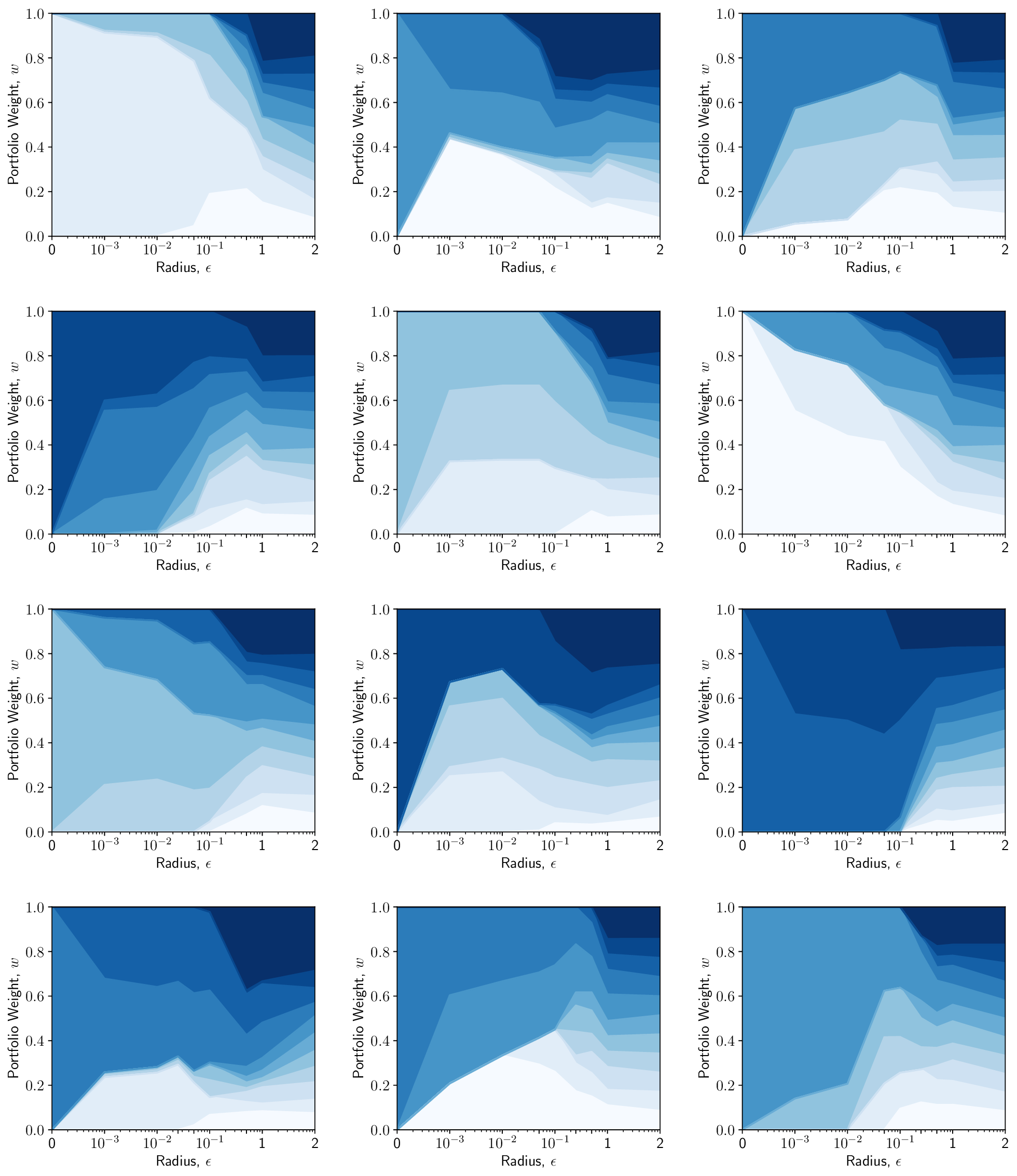}
\caption{Monthly Optimal Weight in 2023:  With $0.1\%$ Transaction Costs.}
\label{fig:twelve figure with 0.1 transaction costs during rebalancing 2023}
\end{figure}


%

%

\section{Conclusion}
In this paper, we introduced a novel cost-sensitive distributionally robust log-optimal portfolio framework using the Wasserstein metric alongside a general convex transaction cost model. We established conditions for robustly survivable trades across distributions within the Wasserstein ball and approximated the infinite-dimensional distributionally robust optimization problem as a finite convex program.

Our empirical studies show that, in the absence of transaction costs, the portfolio tends to converge toward an equal-weighted allocation. With transaction costs, the portfolio shifts toward slightly the risk-free asset, balancing the trade-off between costs and optimal asset allocation. Notably, the portfolio proposed exhibited lower volatility and maximum drawdown compared to strategies without ambiguity considerations, highlighting the practical value of our approach.

Future work could explore extending this framework to accommodate larger asset sets, addressing the computational challenges inherent in such cases. Efficient algorithms, such as those in \cite{hsieh2023solving}, would be crucial for scalability. Additionally, characterizing the optimal radius of the ambiguity set, where this transition occurs, would be of significant interest, as discussed in \cite{blanchet2019robust}.

\bibliographystyle{apalike}
\bibliography{refs}

\appendix
\section{Technical Proofs} \label{appendix: technical proofs}
This appendix collects some technical proofs in Sections~\ref{section: Problem Formulation} and \ref{section: Main Results}.

\subsection{Proofs in Section~\ref{section: Problem Formulation}} \label{appendix: proofs in section formulation}

\begin{proof}[Proof of Lemma~\ref{lemma: robust survival condition with transaction costs rebalancing}]
Fix $ n \geq 1 $ and $ \varepsilon \geq 0.$
We begin by noting that all distributions $ \mathbb{F} \in \mathcal{B}_\varepsilon( \widehat{\mathbb{F}} )  \subseteq \mathcal{M}(\mathfrak{X}) $ are supported on~$ {\mathfrak{X}} $. Thus, according to~\eqref{eq: support set of compound return},  the possible values of $ {\mathcal{X}}_n $ are bounded between $ {\mathcal{X}}_{\min,i} $ and $ {\mathcal{X}}_{\max,i} $ for each asset~$ i $.
Observe that
\begin{align*}
	V(n) 
	& = V(0) \left( c(w) + w^\top {\mathcal{X}}_n \right) \\
	&\geq V(0) \left( c(w) + \sum_{i=1}^{m} w_i {\mathcal{X}}_{\min,i} \right)
	:= V_{\min} (n)
\end{align*}
Since 	$c(w)+	\sum_{i=1}^{m} w_i {\mathcal{X}}_{\min, i} > 0$, it follows immediately that~$ V_{\min}(n) > 0 $.
Additionally, note that~$ V(n) \geq V_{\text{min}}(n) > 0 $ holds almost surely under any~$ \mathbb{F} \in \mathcal{B}_\varepsilon( \widehat{\mathbb{F}} ) $. Hence, it follows~that
$
\mathbb{P}^\mathbb{F}( V(n) > 0 )  	= \mathbb{P}^\mathbb{F}(  c(w) + w^\top {\mathcal{X}}_n > 0 ) = 1
$ for all $n \geq 1$,
and the proof is complete.
\end{proof}

\subsection{Proofs in Section~\ref{section: Main Results}} \label{appendix: proofs in section main results}

\begin{proof}[Proof of Lemma~\ref{lemma: Decomposition of Marginal Distribution}]
We begin by	defining the events $B_j :=\{\mathcal{X}' = \widehat{\mathcal{X}}_j \}$ for $j=1, 2, \dots,N$.
Since the empirical distribution $\widehat{ \mathbb{F} } = \frac{1}{N} \sum_{j=1}^N \delta_{\widehat{\mathcal{X}}_j}$, each event $B_j$ occurs with an equal probability: $\mathbb{P}(B_j) := \mathbb{P}(\mathcal{X}' = \widehat{\mathcal{X}}_j) = 1/N.$ Moreover, by definition of the conditional distribution, for any measurable set $A$, we have $\mathbb{P}( \mathcal{X} \in A | B_j) = \mathbb{F}_j(A)$. Applying the Law of Total Probability, we obtain
\begin{align*}
	\mathbb{F}(A) 
	&=  \mathbb{P}(\mathcal{X} \in A) \\
	& = \sum_{j=1}^N \mathbb{P}( \mathcal{X} \in A | B_j) \mathbb{P}(B_j) 
	= \sum_{j=1}^N \mathbb{F}_j(A) \frac{1}{N}
\end{align*}
Since the equality holds for any measurable set $A$, it follows that
$
\mathbb{F} = \frac{1}{N}\sum_{j=1}^N \mathbb{F}_j(A)
$ and the proof of part~$(i)$ is complete.

To prove part~$(ii)$, we show the joint distribution $\Pi$ can be constructed using the Law of Total Probability.
For any measurable sets $A, B_j \subseteq \mathcal{ { \mathfrak{X} } }$. Since $\widehat{ \mathbb{F} }$ is discrete and $\mathcal{X}$ takes values  $\widehat{\mathcal{X}}_j$ with equal probability, we can partition the event $\{\mathcal{X}' \in B\}$ into disjoint events based on each~$\widehat{\mathcal{X}}_j$: 
$
\{\mathcal{X}' \in B\} = \cup_{j=1}^N \{\mathcal{X}' = \widehat{\mathcal{X}}_j\}\cap B.
$
Applying the Law of Total Probability yields
\begin{align*}
	\Pi(A\times B) 
	&= \mathbb{P}(\mathcal{X} \in A, \mathcal{X}' \in B)\\
	&= \sum_{j=1}^N \mathbb{P}(\mathcal{X} \in A| \mathcal{X}' = \widehat{\mathcal{X}}_j ) \mathbb{P}(\mathcal{X}' = \widehat{\mathcal{X}}_j) \mathbb{1}_{ \{ \widehat{\mathcal{X}}_j \in B\} }\\
	&=\frac{1}{N} \sum_{j=1}^N \mathbb{F}_j(A)  \mathbb{1}_{ \{ \widehat{\mathcal{X}}_j \in B\} }
\end{align*}
where $\mathbb{1}_{ \{ \widehat{\mathcal{X}}_j \in B \}}= 1$ if $\widehat{\mathcal{X}}_j \in B$ and $0$ otherwise.
Recalling that the product measure $\delta_{\widehat{\mathcal{X}}_j} \otimes \mathbb{F}_j$ evaluated on $A \times B$ is
\[
( \delta_{\widehat{\mathcal{X}}_j}  \otimes \mathbb{F}_j ) (A \times B) = \mathbb{F}_j(A) \cdot \delta_{\widehat{\mathcal{X}}_j}(B) =  \mathbb{F}_j(A)  \cdot \mathbb{1}_{\{ \widehat{\mathcal{X}}_j \in B\} }.
\]
Hence, it follows that
$
\Pi(A\times B) = \frac{1}{N} \sum_{j=1}^N ( \delta_{\widehat{\mathcal{X}}_j}  \otimes \mathbb{F}_j ) (A \times B).
$
Since this equality holds for all measurable rectangles~$A \times B \subseteq \mathcal{ { \mathfrak{X} } } \times \mathcal{ { \mathfrak{X} } }$, and the collection of such rectangles generates the product $\sigma$-algebra on $\mathcal{ { \mathfrak{X} } } \times \mathcal{ { \mathfrak{X} } }$, by the $\pi$-$\lambda$ Theorem, the equality extends to all measurable sets in the product space. Therefore, the joint distribution $\Pi$ can be expressed as
$
\Pi = \frac{1}{N}\sum_{j=1}^{N}\delta_{\widehat{\mathcal{X}}_j} \otimes \mathbb{F}_j. 
$
\end{proof}

\begin{proof}[Proof of Lemma~\ref{lemma: equity before auxilary variables with transaction costs during rebalancing}]
Define a shorthand notation: $\phi( {\mathcal{X}}, \widehat{\mathcal{X}}_j) :=  \log \left(1 + w^\top  {\mathcal{X}}\right) + \lambda \| {\mathcal{X}} - \widehat{\mathcal{X}}_j\|$. We must show that 
$
\inf_{\mathbb{F}_j \in \mathcal{M}({ \mathfrak{X} })}   \int \phi ({\mathcal{X}}, \widehat{\mathcal{X}}_j)d{\mathbb{F}_j}
=
\inf_{ {\mathcal{X}} \in  { \mathfrak{X} }} \phi ( {\mathcal{X}}, \widehat{\mathcal{X}}_j).
$
Observe that the left-hand side can be written as
\begin{align}
	\inf_{\mathbb{F}_j\in \mathcal{M}( { \mathfrak{X} })}   \int \phi( {\mathcal{X}},  \widehat{\mathcal{X}}_j) d{\mathbb{F}_j}  
	&  \leq \inf_{ {\mathcal{X}} \in  { \mathfrak{X} }} \int \phi(x, \widehat{\mathcal{X}}_j) \delta_{ {\mathcal{X}}}(dx) \label{inequality:DRO to RO with transaction costs during rebalancing} \\
	&  =\inf_{ {\mathcal{X}} \in  { \mathfrak{X} }}  \phi( {\mathcal{X}}, \widehat{\mathcal{X}}_j) \notag
\end{align}
where inequality~\eqref{inequality:DRO to RO with transaction costs during rebalancing} follows from the fact that $\mathcal{M}({ \mathfrak{X} })$ contains all
the Dirac distributions supported on ${ \mathfrak{X} }$.
Next, we consider
\begin{align*}
	&	\inf_{\mathbb{F}_j\in \mathcal{M}( { \mathfrak{X} })} \int \phi( {\mathcal{X}}, \widehat{\mathcal{X}}_j)d{\mathbb{F}_j} \\
	&\qquad \geq \inf_{\mathbb{F}_j\in \mathcal{M}( { \mathfrak{X} })}  \int \inf_{ {\mathcal{X}} \in  {\mathfrak{X}}} \phi( {\mathcal{X}}, \widehat{\mathcal{X}}_j) d{\mathbb{F}_j} \\
	&\qquad = \inf_{\mathbb{F}_j\in \mathcal{M}( { \mathfrak{X} })}  \inf_{ {\mathcal{X}} \in  { \mathfrak{X} }} \phi( {\mathcal{X}}, \widehat{\mathcal{X}}_j) (\int d{\mathbb{F}_j}) \\
	&\qquad =  \inf_{ {\mathcal{X}} \in  { \mathfrak{X} }} \phi( {\mathcal{X}}, \widehat{\mathcal{X}}_j)
\end{align*}
Hence, the proof is complete.
\end{proof}

\begin{proof}[Proof of Lemma~\ref{lemma:turn into inf_sup with transaction costs during rebalancing}]
%

To show that the dual optimal value is finite, we first show that the dual problem is feasible, meaning there exists at least one set of variables $(\lambda, s_j, z_j)$ satisfying the constraints.
Specifically, consider choosing $\lambda = 0$ and $z_j = 0$ for all $ j = 1, \dots, N $. The constraints then reduce~to:
\begin{align} \label{eq: bounds for s_j}
	\min_{\mathcal{X} \in \mathfrak{X}} \left[ \log\left( c(w) + w^\top \mathcal{X} \right) \right] \geq s_j, \quad \forall j. 
\end{align}
Since $ c(w) + w^\top \mathcal{X} > 0 $ for all $ \mathcal{X} \in \mathfrak{X}$ due to the robust survivability Lemma~\ref{lemma: robust survival condition with transaction costs rebalancing}, the minimum exists and is finite. Therefore, setting $s_j$ to this finite minimum value satisfies the constraints. This shows that the dual problem is feasible.

To see that the dual optimal value is finite,  we recall that the objective function:
$
-\lambda\varepsilon + \frac{1}{N} \sum_{j=1}^N s_j.
$
Since $\lambda \geq 0$ and $\varepsilon \geq 0$, the term $-\lambda\varepsilon \leq 0$.
Suppose, for contradiction, that the objective function is unbounded above. This would require $\frac{1}{N} \sum_{j=1}^N s_j \to \infty$, but this is impossible because $s_j$ are bounded due to the boundedness of $\mathfrak{X}$ and $z_j$.
Similarly, the objective function cannot be unbounded below (i.e., tending to $-\infty$) unless $\lambda \to \infty$ and $\varepsilon > 0$. However, increasing $\lambda$ without bound would decrease the objective value due to the $-\lambda\varepsilon$ term, which cannot be optimal when maximizing. Therefore, the dual optimal value is finite.

We now establish the proof that the optimal solution set is nonempty.
Note that we have established that the dual feasible set is nonempty. According to Theorem~\ref{theorem:convex reduction with non-zero transaction costs during rebalancing}, the dual problem is a convex optimization problem, and the feasible set is convex and closed.
Because the objective function is continuous and the feasible set is nonempty and closed, the maximum is achieved within the feasible set. Therefore, there exists at least one optimal solution~$(\lambda^*, s_j^*, z_j^*)$, meaning the set of dual optimal solutions is nonempty.

To complete the proof, it remains to show that the set of dual optimal solutions is bounded.
We need to demonstrate that the variables $\lambda$, $s_j$, and $z_j$ are bounded in the set of optimal solutions.

\emph{Boundedness of $\lambda$:} If $\lambda \to \infty$ and $\varepsilon > 0$, the objective $-\lambda\varepsilon + \frac{1}{N} \sum_{j=1}^N s_j$ tends to $-\infty$, which is worse than any finite objective value. Therefore, to maximize the objective, $\lambda$ must be bounded above.

\emph{Boundedness of $z_j$:} Since $\|z_j\|_* \leq \lambda$ and $\lambda$ is bounded, each $z_j$ is also bounded.

\emph{Boundedness of $s_j$:} From the constraint:
\[
\min_{\mathcal{X} \in \mathfrak{X}} \left[ \log\left( c(w) + w^\top \mathcal{X} \right) + z_j^\top (\mathcal{X} - \widehat{\mathcal{X}}_j) \right] \geq s_j,
\]
and the boundedness of $\mathfrak{X}$ and $z_j$, it follows that $s_j$ are bounded below. Similarly, $s_j$ are bounded above because the expression inside the minimum is bounded above (since $\mathcal{X}$ and $z_j$ are bounded).
Therefore, all variables in the dual problem are bounded in the set of optimal~solutions. \qedhere
\end{proof}

\end{document}